\numberwithin{equation}{section}
\theoremstyle{plain}
\newtheorem{theorem}{Theorem}[section]
\newtheorem{proposition}{Proposition}[section]
\newtheorem{lemma}{Lemma}[section]
\theoremstyle{remark}
\newtheorem{remark}{Remark}[section]
\theoremstyle{definition}
\newcommand{\Ld}{\mathbf{L}}
\newcommand{\Hd}{\mathbf{H}}
\newcommand{\hone}{H^1(\Omega)}
\newcommand{\hones}{H^1}
\newcommand{\honed}{\Hd^1(\Omega)}
\newcommand{\honeds}{\Hd_0^1}
\newcommand{\hzerod}{\Hd_0^1(\Omega)}
\newcommand{\hzerods}{\Hd_0^1}
\newcommand{\htwo}{\mathbf{H}^2(\Omega)}
\newcommand{\htwos}{\mathbf{H}^2}
\newcommand{\ltwo}{L^{2}(\Omega)}
\newcommand{\ltwos}{L^{2}}
\newcommand{\ltwod}{\Ld^2(\Omega)}
\newcommand{\ltwods}{\Ld^2}
\newcommand{\linfds}{\Ld^{^\infty}}
\newcommand{\lzerotwo}{L_0^2(\Omega)}
\newcommand{\bv}[1]{\mathbf{#1}}
\newcommand{\gradv}[1]{\nabla\bv{#1}}
\newcommand{\diver}[1]{\textsl{div} \, \bv{#1}}
\newcommand{\curl}[1]{\textsl{curl} \, \bv{#1}}
\newcommand{\lp}{\bigl(}
\newcommand{\rp}{\bigr)}
\newcommand{\alp}{\left(}
\newcommand{\arp}{\right)}
\newcommand{\lb}{\left\lbrace}
\newcommand{\rb}{\right\rbrace}
\newcommand{\LN}{\left\|}
\newcommand{\RN}{\right\|}
\newcommand{\dt}{\tau}
\newcommand{\ew}[1]{\mathcal{#1}}
\newcommand{\inc}{\updelta}
\newcommand{\tril}{\,b}
\newcommand{\lvt}{\bm{\upsilon}_h}
\newcommand{\avt}{\bm{\omega}_h}
\newcommand{\ptf}{q_h}
\newcommand{\nunot}{\nu_{_{0}}}
\newcommand{\po}{{\overline{l}}}
\newcommand{\spatialconstantone}{\eta}
\newcommand{\spatialconstanttwo}{\xi}
\newcommand{\peu}{\bv{S}}
\newcommand{\pew}{\mathbf{R}}
\newcommand{\stresstensor}{\sigma}
\newcommand{\momentstresstensor}{\varSigma}
\newcommand{\inertiatensor}{\mathbb{I}}
\newcommand{\inertiamom}{\jmath}
\newcommand{\chartime}{\mathscr{T}}
\newcommand{\polV}{{\mathbb{V} }}
\newcommand{\polQ}{{\mathbb{Q} }}
\begin{document}
\title[Micropolar Navier-Stokes]{The micropolar Navier-Stokes equations: A priori error analysis}

\author[R.H.~Nochetto]{Ricardo H.~Nochetto}
\address[R.H.~Nochetto]{Department of Mathematics and Institute for Physical Science and Technology,
University of Maryland, College Park, MD 20742, USA.}
\email{rhn@math.umd.edu}

\author[A.J.~Salgado]{Abner J.~Salgado}
\address[A.J.~Salgado]{Department of Mathematics, University of Maryland, College Park, MD 20742,
USA.}
\email{abnersg@math.umd.edu}

\author[I.~Tomas]{Ignacio Tomas}
\address[I.~Tomas]{Department of Mathematics, University of Maryland, College Park, MD 20742,
USA.}
\email{ignaciotomas@math.umd.edu}

\thanks{
This work is supported by NSF grants DMS-0807811 and DMS-1109325.
AJS is also partially supported by NSF grant DMS-1008058 and an AMS-Simons Grant.
}

\keywords{Micropolar Flows, Ferrofluids, Fluids with Microstructure.}

\subjclass[2010]{65N12; 65N15; 65N30; 76D99; 76M10}

\date{Submitted to M$^3$AS \today.}

\begin{abstract}
The unsteady Micropolar Navier-Stokes Equations (MNSE) are a system of parabolic partial differential equations coupling linear velocity and pressure with angular velocity: material particles have both translational and rotational degrees of freedom. We propose and analyze a first order semi-implicit fully-discrete scheme for the MNSE, which decouples the computation of the linear and angular velocities, is unconditionally stable and delivers optimal convergence rates under assumptions analogous to those used for the  Navier-Stokes equations. With the help of our scheme we explore some qualitative properties of the MNSE related to ferrofluid manipulation and pumping. Finally, we propose a second order scheme and show that it is almost unconditionally stable.
\end{abstract}

\maketitle
\section{The Micropolar Navier-Stokes Equations: Background and Motivations}
\label{sec:Intro}
The Micropolar Navier-Stokes Equations (MNSE) are a system of time-dependent 
partial differential equations that constitutes a framework to describe the dynamics of 
continuum media where the material particles have both translational and rotational degrees of freedom. Consequently, these equations are very attractive for the dynamic description of media subject to distributed couples and polar media in general.
\subsection{The Basic Model}

Let us briefly describe the derivation of the MNSE.
The mathematical modeling of the laws governing the motion of a fluid begins with 
a description of the conservation of mass, linear and angular momentum, which (see \cite{Eringen99} 
or \cite{Luka}) can be written as:
\begin{alignat}{10}
\nonumber
\frac{D\rho}{Dt} &= 0, \\
\label{conserlinearmom}
\rho \frac{D\bv{u}}{Dt} &= \diver{\stresstensor} + \rho \bv{f}, \\
\label{conserangmom}  
\rho \frac{D}{Dt}\alp \bv{\ell} + \bv{x} \times \bv{u} \arp 
&= \rho \bv{g} + \rho \bv{x} \times \bv{f} + \diver{\momentstresstensor} 
+ \bv{x} \times \diver{\stresstensor} + \stresstensor_{\times},   
\end{alignat}
where
$\rho$ is the density;
$\bv{u}$ is the linear velocity;
$\stresstensor \in \mathbb{R}^{3 \times 3}$ is the Cauchy stress tensor;
$\bv{f}$ is the density of external body forces per unit mass;
$\bv{\ell}$ is the angular momentum per unit mass;
$\momentstresstensor \in \mathbb{R}^{3 \times 3}$ is the moment stress tensor;
$\bv{g}$ represents a body source of  moments; and
$(\stresstensor_{\times})_i = \epsilon_{ijk} \stresstensor_{jk}$, where $\epsilon_{ijk}$ is the
Levi-Civita
symbol, i.e., $\epsilon_{ijk} = \frac12(i-j)(j-k)(k-i)$.
As usual, we denote by $D/Dt$ the material derivative.
The physical meaning of the moment stress tensor $\momentstresstensor$ is analogous to 
the stress tensor $\stresstensor$. In other words, given a plane with normal $\nu$, the vector 
$\bv{m} = \momentstresstensor \cdot \nu$
is the moment vector per unit area acting on that plane.

Take the cross product of $\bv{x}$ and \eqref{conserlinearmom} and subtract the result from
\eqref{conserangmom} to obtain a simplified version of the conservation of angular momentum, namely
\begin{equation}
\label{conserpointmom}
\rho \frac{D\bv{\ell}}{Dt} = \rho \bv{g} + \diver{\momentstresstensor} 
+ \stresstensor_{\times}.
\end{equation}
Expressions \eqref{conserangmom} and \eqref{conserpointmom} are usually attributed to
Dahler and Scriven (see \cite{Dahler1} and \cite{Dahler2}) and
have been extensively used by Eringen (see \cite{Eringen99} and \cite{Eringen01})
to develop a general theory of continuum media with
director fields or, more generally, continuum media with microstructure.

In classical continuum mechanics it is usually assumed that the microconstituents do not possess angular momentum and there are no distributed couples. In other words,
$\ell = 0$, $\momentstresstensor = 0$  and $\bv{g} = 0$. Under these assumptions,
\eqref{conserpointmom} implies that the stress tensor $\stresstensor$ is symmetric,
which is the situation generally considered in the literature.
These assumptions are appropriate for most practical applications.
However, this approach is not satisfactory (nor even physical) when, for instance,
the orientability of the microconstituents plays a major role in
the physical process of interest. Classical examples are anisotropic fluids,  liquid polymers,
fluids with rod-like particles, ferrofluids, liquid crystals and polarizable media in general. In these cases a precise description of the moments and rotations associated to the microconstituents of the material is necessary.

In the situation described above, the conservation of angular momentum
\eqref{conserpointmom} needs to be taken explicitly into account which, among other things,
means that it is necessary to propose constitutive relations for $\stresstensor$, $\ell$ 
and $\momentstresstensor$.
Eringen proposed the following (cf.~\cite{EringMicro,Eringen01,Luka}):
\[
  \ell = \inertiatensor \bv{w},
\]
where $\inertiatensor \in \mathbb{R}^{3 \times 3}$ is the so-called microinertia density tensor;
\[
  \stresstensor = (-p + \lambda \diver{u}) \bv{I}
  + \mu (\gradv{}\bv{u}+\gradv{}\bv{u}^T) + \mu_r (\gradv{}\bv{u}-\gradv{}\bv{u}^T) +
  \bv{w}_{\times},
\]
where $p$ is the pressure, $\bv{I} \in \mathbb{R}^{3 \times 3}$ is the identity
tensor, and $(\bv{w}_{\times})_{ij} = \varepsilon_{kij} \bv{w}_k$; and
\[
  \momentstresstensor = \gamma_0 \, \diver{w} \, \bv{I} + \gamma_d (\gradv{}\bv{w}+\gradv{}\bv{w}^T)
  + \gamma_a (\gradv{}\bv{w}-\gradv{}\bv{w}^T).
\]
To further simplify the model we will assume that $\inertiatensor$ is isotropic, so that it can be replaced by a scalar $\inertiamom$, the so-called inertia density. 
To guarantee that the constitutive relationships do not violate the Clausius-Duhem inequality
(see \cite{Luka}), the material constants $\mu$, $\mu_r$, $\gamma_0$, $\gamma_a$ and $\gamma_d$ 
are required to satisfy the following relations:
\begin{equation}
\label{eq:thermconst}
\begin{gathered}
  3 \lambda + 2 \mu \geq 0, \ \ 
  \mu \geq 0, \ \ 
  \mu_r \geq 0, \ \ 
  \gamma_d \geq 0, \ \ 
  \gamma_a + \gamma_d \geq 0, \\
  3\gamma_0 + 2 \gamma_d \geq 0 , \ \ 
  - (\gamma_a + \gamma_d) \leq \gamma_d - \gamma_a \leq (\gamma_a + \gamma_d). 
\end{gathered}
\end{equation}
As a final simplification, we will assume that the fluid is incompressible and has constant density.

Let $\Omega \subset \mathbb{R}^d$  with $d=2$ or $3$ be the domain occupied by the fluid. Replacing these constitutive relationships into \eqref{conserlinearmom} and \eqref{conserpointmom},
we arrive at the MNSE,
\begin{align}\label{eq:microNS}
\left\{
\begin{aligned}
\bv{u}_{t} - (\nu + \nu_r)\Delta \bv{u} + (\bv{u}\cdot\nabla)\bv{u}+\nabla p &= 2 \nu_r
\curl{w} + \bv{f}  \text{,}  \\
\diver{u} &= 0 \text{,} \\
\inertiamom \bv{w}_{t} - (c_a + c_d)\Delta \bv{w} + \inertiamom (\bv{u}\cdot\nabla)\bv{w}- (c_0 +
c_d - c_a) \nabla \diver{w} + 4 {\nu}_r \bv{w} &= 2 \nu_r \curl{u} + \bv{g} \text{,} 
\end{aligned}
\right.
\end{align}
where we implicitly redefined the pressure as $\rho^{-1} p$, and the constants $\nu$, $\nu_r$, $c_a$, $c_d$ and $c_0$ are the kinematic viscosities (i.e. $\mu$, $\mu_r$, $\gamma_a$, $\gamma_d$ and $\gamma_0$ divided by $\rho$, respectively). This system is supplemented with the following initial and boundary conditions
\begin{equation}
\label{IBdata}
\begin{aligned}
  &\bv{u}\rvert_{t=0} = \bv{u}_0, &&\bv{w}\rvert_{t=0} = \bv{w}_0, \\
  &\bv{u}\rvert_{\partial\Omega \times (0,T)} = 0, 
      &&\bv{w}\rvert_{\partial\Omega \times (0,T)} = 0.
\end{aligned}
\end{equation}
The reader is referred to \cite{Luka} for questions regarding existence, uniqueness and
regularity of solutions to \eqref{eq:microNS}-\eqref{IBdata} and related models. The purpose of our work is to propose and analyze numerical techniques for this problem. To simplify notation, in what follows we will set 
\begin{align}
\label{newconstants}
\nunot = \nu + \nu_r , \ \ c_1 = c_a + c_d, \  \  c_2 = c_0 + c_d - c_a , 
\end{align}
and we will assume that $c_1, c_2 > 0$ (see for instance \cite{Luka}) which is consistent with the thermodynamical constraints \eqref{eq:thermconst}.

The MNSE can be regarded as a building block of models that describe the physics of
polarizable media. For instance, Rosensweig (see \cite{Ros97}) described the behavior of ferrofluids subject to a magnetizing field $\bv{h}$
with the MNSE and
\begin{equation}
\label{MagmicroNS}
  \begin{dcases}
    \bv{f} = \mu_0 (\bv{m}\cdot\nabla)\bv{h}, \ \ \ \bv{g} = \mu_0 \bv{m} \times \bv{h}, \\
    \bv{m}_t - \alpha \Delta \bv{m} + (\bv{u}\cdot\nabla)\bv{m} = \bv{w} \times \bv{m} -
    \frac{1}{\chartime} (\bv{m} - \varkappa_0 \bv{h}) & \text{in } \Omega,
  \end{dcases}
\end{equation}
where $\bv{m}$ denotes the magnetization field and $\chartime > 0 $, $\alpha \geq 0$, $\varkappa_0 >
0 $ are material constants. The magnetizing field is assumed to obey the Maxwell equations. The reader is referred to \cite{Ami10} for an analysis of this model.

In addition to applications in smart fluids and polarizable media, there has been
a growing interest on the MNSE in other areas. For instance, they have been used to describe collisional granular flows, where the size of the microconstituents is comparable to the macroscopic scale (\cite{Mit02}) and the frictional interaction between particles is not properly modeled by the classical equations of hydrodynamics. Another application is the modeling of micro and nano flows (\cite{Pap99}), where again the size of the microconstituents is comparable to the ``macroscopic'' scale and the rotational effects cannot be neglected.

The key points of this paper are organized as follows. Section \ref{numericalexperiment} introduces a very simple experiment (ferrofluid pumping) as a motivation for the analysis and numerical implementation of the MNSE. In Section \ref{subsec:energyestiamtes} we recall the basic energy estimates and existence theory for the MNSE. Paragraphs \ref{sub:timedisc} and \ref{finitelementspace} introduce the notation and the basic tools required for the  analysis of the numerical scheme proposed later in Section \ref{sec:Scheme}. Error estimates for the linear and angular velocities are derived in Section \ref{sub:errorestvelocities}, and error estimates for the pressure are derived in Section \ref{errorestimatespressure}. 
We present a formally second order scheme in Section~\ref{sec:2ndorder}, and show that it is almost unconditionally stable, i.e.~it is stable provided the time step is smaller than a constant dependent on the material parameters, but not on the space discretization; see \eqref{eq:conddt2ndorder} for details. Finally, in Section \ref{sec:numericalvalidation}, we provide numerical validation of the error estimates derived earlier.

\subsection{Potential Application: Ferrofluid Pumping by Magnetic Induction}
\label{numericalexperiment}

\begin{figure}
\begin{center}
\includegraphics[scale=0.4]{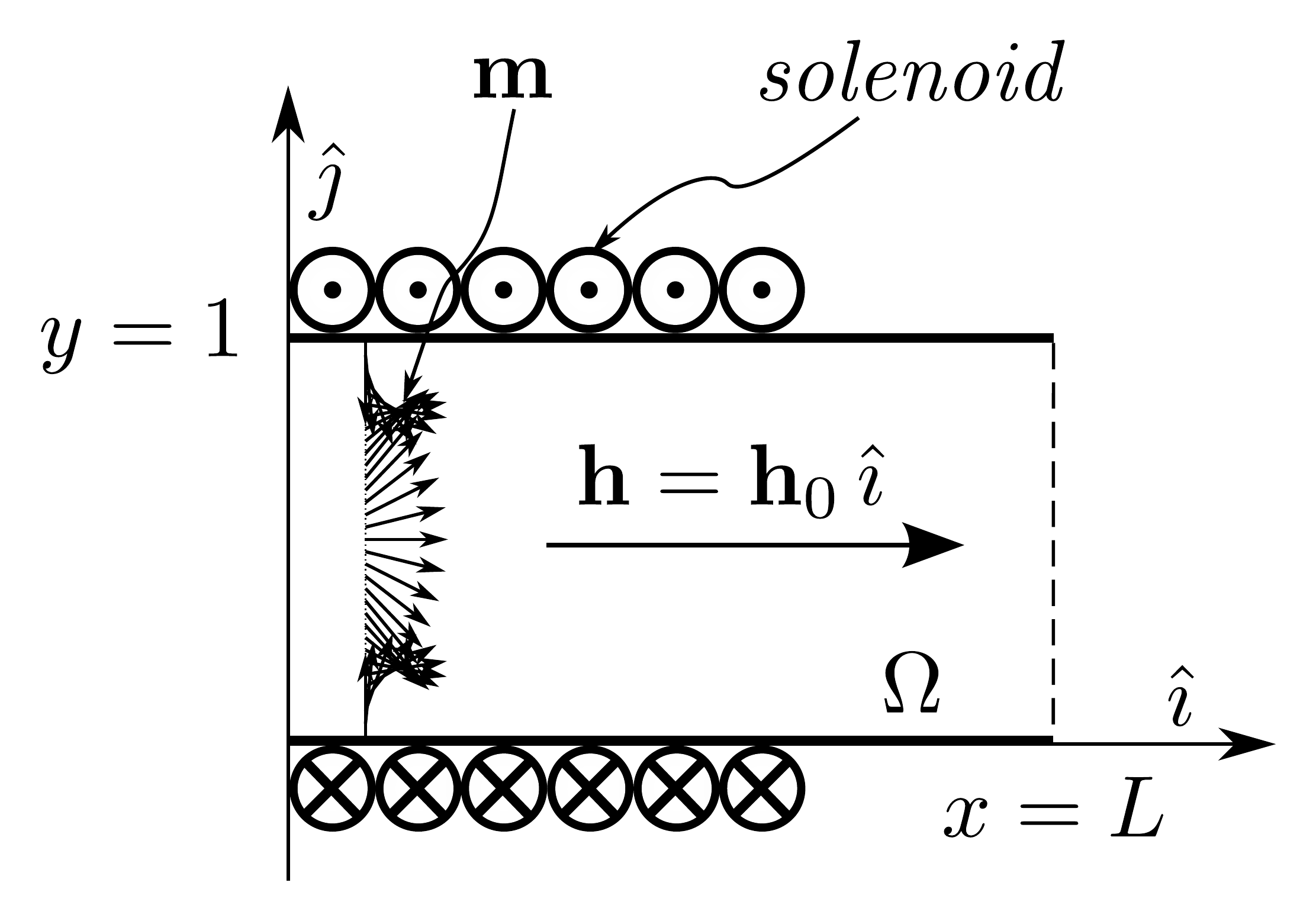}
\end{center}
\caption{Idealized configuration of a ferrofluid pumping experiment. A planar duct with a solenoid that generates a uniform magnetizing field $\bv{h} = h_0 \hat\imath$. Since, $\bv{g} = \mu_0 \bv{m} \times \bv{h}$ (see \eqref{MagmicroNS}), it will produce torque in the regions where $\bv{h}$ and $\bv{m}$ are not collinear. In a real ferrofluid the magnetization vector field $\bv{m}$ would evolve through the channel satisfying the evolution equation \eqref{MagmicroNS} and will try to align with the magnetizing field. However, and as part of an idealized setting, we will assume that the magnetization profile $\bv{m}$ depends only on the $y$-direction.}
\label{figura5}
\end{figure}

To illustrate the differences between the MNSE and the classical Navier-Stokes equations here we propose a setting by means of which it is possible, at least theoretically, to generate fluid motion via a well designed forcing term in the equation of angular momentum. This example is inspired by \cite{Zahn95}, where a ferrofluid is pumped by the actuation of a spatially-uniform sinusoidally time-varying magnetizing field. Another pumping strategy, this time based on a magnetizing field that is varying in space and time, is proposed in \cite{Mao05}.

The idealized setting that we shall consider is depicted in Figure~\ref{figura5}. We assume that our domain is a planar duct of unit height and length $L\geq1$, which is wrapped by a solenoid that generates a uniform magnetizing field $\bv{h} = h_0  \, \hat\imath$, where $\bv{h}_0$ is just a positive constant. From \eqref{MagmicroNS} we infer that $\bv{f}=0$, since the magnetizing field is constant in space. As part of our idealized setting, we disregard the evolution equation in \eqref{MagmicroNS} for the magnetization field, and set $\bv{m}$ to be constant in time and depend only on the vertical variable $y$, i.e.,
\[
  \bv{m} = m_0 (\cos \theta \hat\imath + \sin\theta \hat\jmath) , 
\]
where $m_0$ is just a positive constant, and $\theta = \theta(y)$. Using \eqref{MagmicroNS} we get:
\begin{equation}
  \bv{g} = - \mu_0 m_0 h_0 \sin \theta(y) \, \hat\kappa .
\label{eq:formofg}
\end{equation}

\begin{figure}
\begin{center}
\includegraphics[scale=0.4]{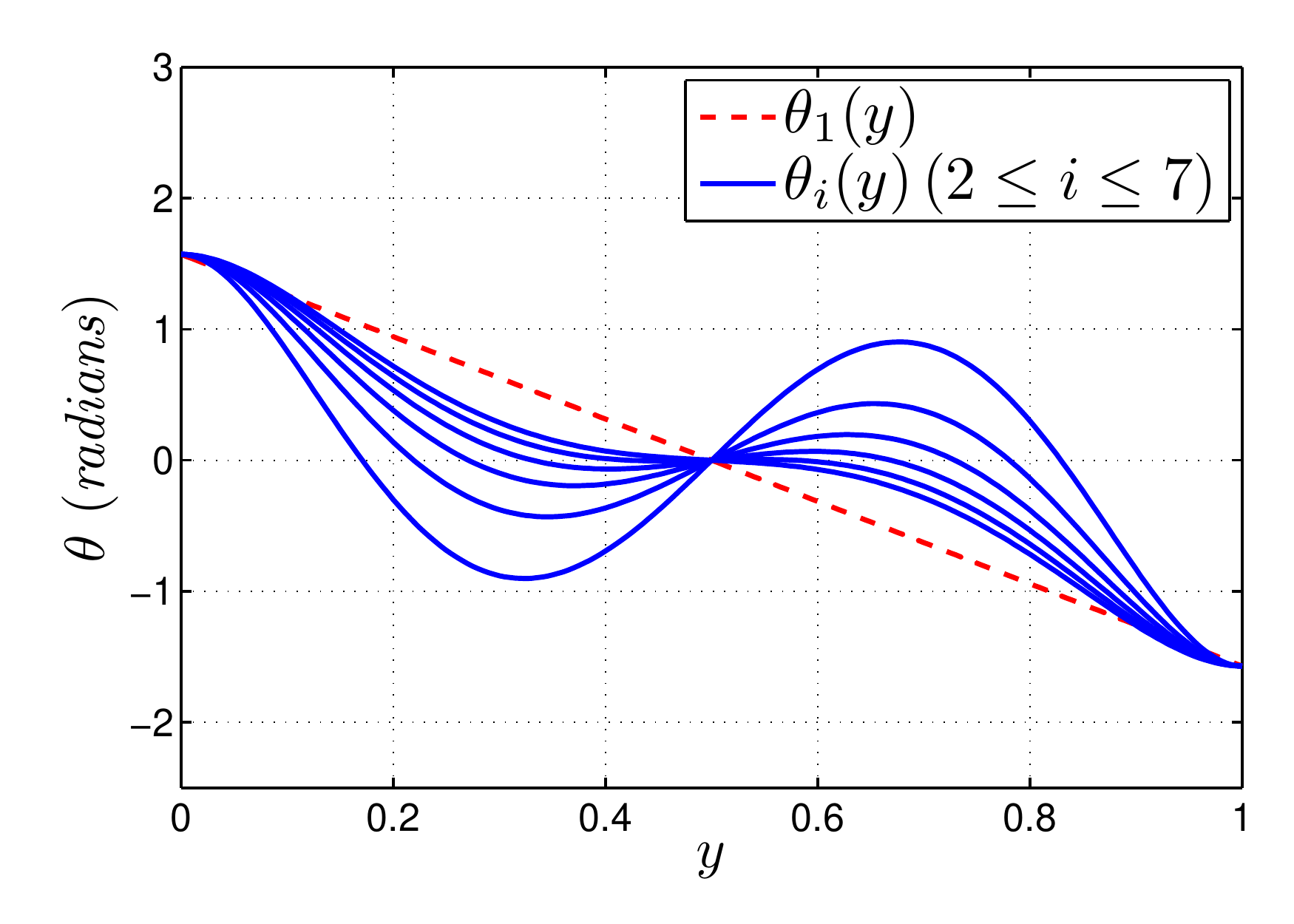}
\caption{Plot of the function $\theta_1(y)$ (dotted line), and the family of functions
$\lb\theta_i(y)\rb_{i=2}^7$ (solid lines). These are used to induce a force in the angular momentum equation. The function $\theta_1$ is a linear interpolation between $\pm\pi/2$ and $\theta_i$, for $i=2,\ldots,7$ are small perturbations of it.}
\label{figura6}
\end{center}
\end{figure}

As reference configuration we will consider a linear interpolation between the points $(0,\pi/2)$ and $(1,-\pi/2)$, that is
\[
  \theta_1(y) = - \pi \alp y - \frac12 \arp.
\]
As perturbations from this reference case we consider, for $i=2,\ldots,7$,
\[
  \theta_i(y) = - \frac{\pi(480 x^5 - 1200 x^4 - 4 x^3 (i^2 + 10 i - 275) + 6 x^2 (i^2 + 10 i - 75)
-
  i^2 - 5 (2 i - 7))}{2 (i^2 + 10i - 35)}.
\]
A plot of these functions is provided in Figure~\ref{figura6}. Notice that they all satisfy
$\theta_i(0) = \pi/2$, $\theta_i(1/2) = 0$ and $\theta_i(1) = -\pi/2$ which we require to model a magnetization field that is perfectly aligned with the magnetizing field at the center of the channel, but is perpendicular to it at the top and bottom walls.

\begin{figure}
\begin{center}
\includegraphics[scale=0.55]{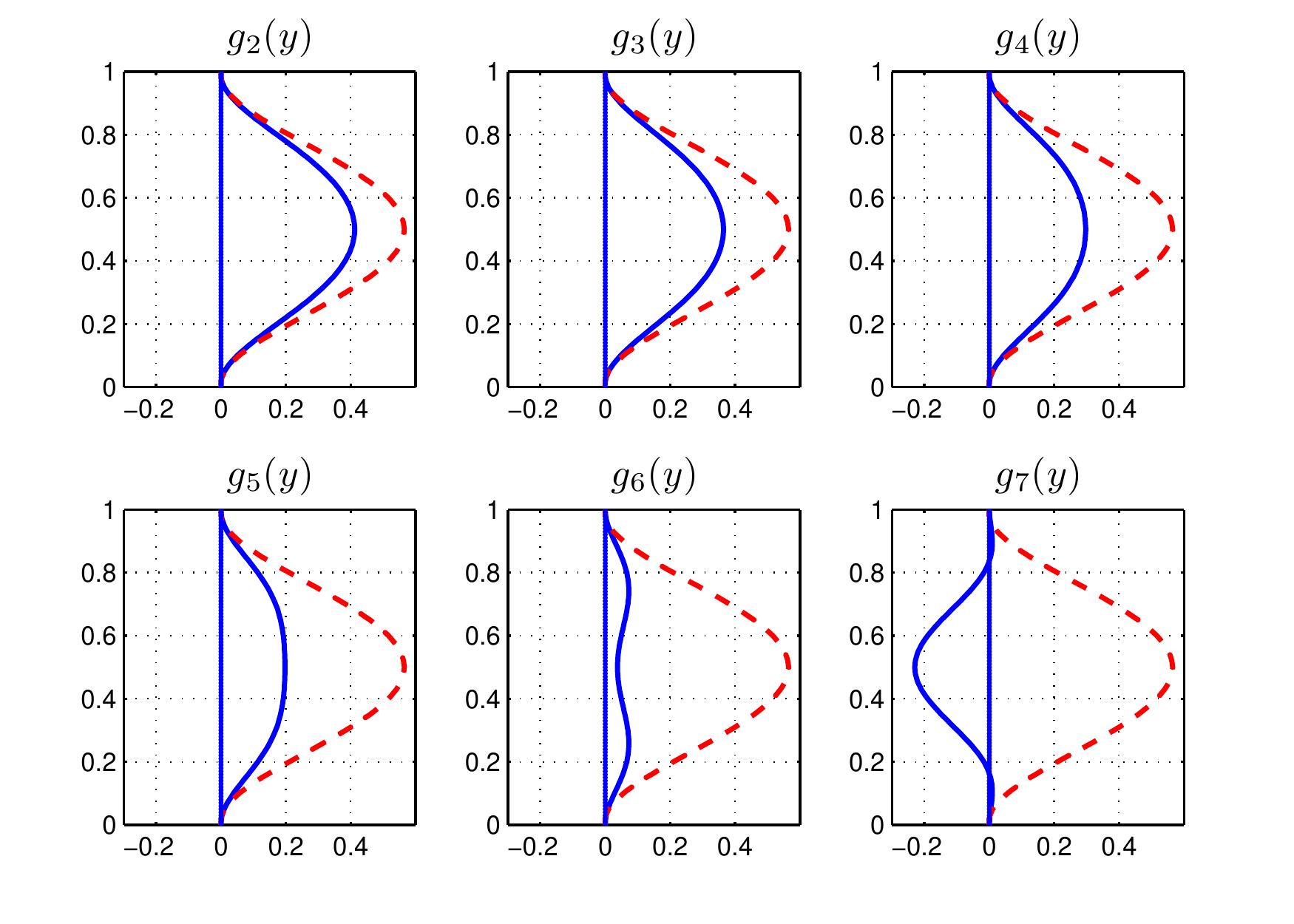}
\caption{Velocity profiles obtained with the forcing terms $\lb\bv{g}_i\rb_{i=2}^7$ (solid lines). For comparison the velocity profile obtained by using $\bv{g}_1$ is also shown (dotted line). The figures show that it is possible to generate linear velocity via appropriate actuation in the angular momentum equation. Notice that, although it is not dramatically different from the others, the forcing term $\bv{g}_7$ induces motion in the opposite direction.}
\label{figura7}
\end{center}
\end{figure}
We assume the fluid is initially at rest, the boundary conditions for the upper and lower part of the duct are no slip, and for the left and right sides of the duct we consider open boundary conditions. We apply the magnetizing field linearly in time, that is we set $\bv{h} = h_0 (t/T) \hat\imath$. We let $L=1$, and the material constants be $\nu = \nu_r = 1$, $c_a = c_d = c_0 = 1$, and $\jmath = 1$. We use a Taylor-Hood finite element discretization of 40 elements in the horizontal and vertical  directions, and a time-step $\dt = 1/50$. The numerical scheme used for this example is the first order method discussed and analyzed in this work. Figure~ \ref{figura7} shows the velocity profiles at time $t=T$
and $x=1$ obtained by setting $\bv{g}$ as in \eqref{eq:formofg}. These results are stable (in the sense that they do not change) with respect to the spatial and temporal discretizations, and length of the channel. However, as it would happen with any physical model, these results can be sensitive to changes in the constitutive parameters. A discussion about the possible influence of the constitutive parameters on the pumping phenomena goes beyond the scope of this paper (see for instance \cite{Rinal02}). 

The results in Figure~\ref{figura7} give an idea about the kind of forces that are necessary in a real ferrohydrodynamic setting, in particular in the case of a spatially uniform and sinusoidal in time magnetizing field as in \cite{Zahn95}. The main observation here is that small variations of the forcing term can yield quite different flow regimes, including flow in the opposite direction, this feature is observed in experiments (cf. \cite{Zahn1995} ). Finally, the reader should be reminded that this is just an idealized setting which illustrates the main pumping mechanism. In real ferrohydrodynamics we cannot set the value of magnetization $\bv{m}$ as we please because $\bv{m}$ is actually determined by the evolution law in \eqref{MagmicroNS}.
\section{Notation and Preliminaries}
\label{preliminariessection}
We shall consider system \eqref{eq:microNS} in an open, bounded, simply connected domain 
$\Omega \subset \mathbb{R}^d$ with $d=2,3$, with a smooth boundary $\partial\Omega$, for a finite interval of time $(0,T)$, and we will denote $\Omega_T = \Omega \times (0,T)$. We use the standard Sobolev spaces $W^k_q(\Omega)$ for $0\leq k \leq \infty$ and $1\leq q \leq \infty$ that consist of functions $f \in L^q(\Omega)$ whose distributional derivatives of order up to $k$ are also in $L^q(\Omega)$. To simplify notation, we set $H^k(\Omega) = W^k_2(\Omega)$, and denote the closure of $\mathcal{C}^\infty_0(\Omega)$ in $H^1(\Omega)$ by $H^1_0(\Omega)$. We denote with bold characters vector valued functions and  their spaces. The scalar product in $L^2(\Omega)$ and $\bv{L}^2(\Omega)$ are indistinctly denoted by $(\cdot,\cdot)$. The subspace of functions in $\ltwo$ with zero mean is denoted by $\lzerotwo$. Whenever $E$ is a normed space, we denote by $\| \cdot \|_E$ its norm.
The space of functions $\phi:[0,T] \rightarrow E$ such that the map 
$(0,T) \ni t \mapsto \|\phi(t)\|_E \in \mathbb{R}$ is $L^q$-integrable is denoted by $L^q(E)$.

We shall make repeated use of the following integration by parts formula for the $\curl{}$
operator:
\begin{equation}\label{intbypartscurl}
(\curl{w},\bv{u}) = (\bv{w},\curl{u}) \quad \forall \bv{u}, \bv{w} \in \hzerod.
\end{equation}
In addition, we recall that the following orthogonal decomposition of $\hzerod$ 
\begin{equation*}
  \|\gradv{u} \|_{\ltwods}^2 = \|\curl{u} \|_{\ltwods}^2 + \|\diver{u} \|_{\ltwods}^2, \quad
  \forall \bv{u} \in \hzerod 
\end{equation*}
holds true (provided $\Omega$ is bounded and simply connected, see for instance \cite{Girault}) which implies
\begin{equation}\label{curlineq}
  \|\curl{u}\|_{\ltwods}^2 \leq \|\gradv{u}\|_{\ltwods}^2 \quad \forall \bv{u} \in \hzerod.
\end{equation} 

We use the following two classical spaces of divergence-free functions (see for instance
\cite{Temam})
\[
  \mathbb{H} = \lb \bv{u} \in \ltwod \ | \ \diver{u} = 0 \ \text{in } \Omega \text{ and }
  \bv{u}\cdot\nu = 0 \text{ on } \partial\Omega \rb,
  \quad
  \mathbb{V} = \hzerod \cap \mathbb{H}.
\]

Henceforth $C$ denotes a generic constant, whose value might change at each
occurrence. This constant might depend on the data of our problem and, when discussing
discretization, its exact solution, but it does not depend on the discretization parameters or the
numerical solution. We denote by $C_p$ the best constant in the Poincar\'e inequality, i.e.,
\[
  \|\bv{u}\|_{\ltwos} \leq C_p\|\nabla \bv{u}\|_{\ltwods} \quad \forall \bv{u} \in \hzerod,
  \quad \ C_p \approx  \textsl{diam}(\Omega).
\]

We will use, as it has become customary, the following trilinear form
\[
  \tril(\bv{u},\bv{v},\bv{w}) = \sum_{i,j} \int_{\Omega}  \bv{u}^{i} \, \bv{v}_{x_i}^j \bv{w}^{j} \,
dx,
  \quad \bv{u},\bv{v},\bv{w}\in\hzerod,
\]
which, as it is well known (cf.~\cite{Temam}), is skew-symmetric whenever the first
argument belongs to $\mathbb{V}$. In addition, we shall use the following, also 
well known, inequalities (see \cite{MarTem}):
\begin{align}
\label{firstineq}
  \tril(\bv{u},\bv{v},\bv{w}) &\leq C
  \|\nabla \bv{u}\|_{\ltwods} \|\nabla \bv{v}\|_{\ltwods} \|\nabla \bv{w}\|_{\ltwods},
  &&\forall \bv{u},\bv{v},\bv{w} \in \hzerod, \\
\label{fourthineq}
  \tril(\bv{u},\bv{v},\bv{w}) &\leq C 
  \|\bv{u}\|_{\linfds} \|\gradv{v}\|_{\ltwods} \|\bv{w}\|_{\ltwods},
  &&\forall \bv{u} \in \htwo, \forall \bv{v}\in \hzerod, \forall \bv{w}\in \ltwod, \\
\label{thirdineq}
  \tril(\bv{u},\bv{v},\bv{w}) &\leq C
  \|\bv{u}\|_{\ltwods} \|\gradv{v}\|_{\ltwods} \|\bv{w}\|_{\linfds},
  &&\forall \bv{u} \in \ltwod, \forall \bv{v}\in \hzerod, \forall \bv{w}\in \htwo, \\
\label{secondineqstar}
  \tril(\bv{u},\bv{v},\bv{w}) &\leq C
  \|\bv{u}\|_{\ltwods} \|\bv{v}\|_{\htwos} \|\nabla \bv{w}\|_{\ltwods},
  &&\forall \bv{u} \in \ltwod, \forall  \bv{v} \in \htwo, \forall \bv{w} \in \hzerod.
\end{align}
\subsection{Energy Estimates and Existence Theorems}
\label{subsec:energyestiamtes}
The stability and error analysis of the scheme that will be proposed in Section \ref{sec:Scheme} is based on energy arguments. Therefore, to gain intuition, let us briefly describe the basic formal energy estimates that can be obtained from \eqref{eq:microNS}. Multiply the linear momentum equation by $\bv{u}$ and the angular momentum equation by $\bv{w}$ and integrate in $\Omega$. Adding both ensuing equations, we obtain
\begin{equation*}
  \frac{1}{2}\frac{d}{dt} \lp \LN\bv{u}\RN_{\ltwods}^2 + \inertiamom  \|\bv{w}\|_{\ltwods}^2 \rp +
  \nunot \LN\gradv{u} \RN_{\ltwods}^2 + c_1 \LN\gradv{w}\RN_{\ltwods}^2
  + c_2 \LN\diver{w}\RN_{\ltwods}^2 + 4 \nu_r \LN \bv{w} \RN_{\ltwods}^2 
  = 4 \nu_r \lp \curl{u},\bv{w} \rp + \lp \bv{f},\bv{u} \rp + \lp  \bv{g},\bv{w} \rp \, ,
\end{equation*}
where the parameters $\nunot$, $c_1$ and $c_2$ were defined in \eqref{newconstants}. Repeated applications of Young's and Poincar\'e's inequalities yield, after integration in time,
\begin{align}
\label{energybound}
\begin{gathered}
\LN \bv{u}(t) \RN_{\ltwods}^2 + \inertiamom  \LN \bv{w}(t) \RN_{\ltwods}^2 + \nu \int_{0}^{t}
\LN\gradv{u}(s)\RN_{\ltwods}^2 \ ds
+ c_1 \int_{0}^{t} \LN\gradv{w}(s)\RN_{\ltwods}^2 \ ds + 2c_2 \int_{0}^{t} \LN \diver{w}
\RN_{\ltwods}^2 ds \leq \\
\leq C_p^2 \int_{0}^{T} \alp \frac{1}{\nu}\|\bv{f}(s)\|_{\ltwods}^2 +
\frac{1}{c_1}\|\bv{g}(s)\|_{\ltwods}^2 \arp ds 
+ \LN \bv{u}_0 \RN_{\ltwods}^2 + \inertiamom  \LN \bv{w}_0  \RN_{\ltwods}^2
\ \ \ \ \forall t \leq T \, ,
\end{gathered}
\end{align}

This formal energy estimate suggests that solutions to \eqref{eq:microNS} are such that
\begin{equation}\label{minregsetting}
\bv{u}\in L^{\infty}(\mathbb{H})\cap L^{2}(\mathbb{V}), \qquad
\bv{w}\in L^{\infty}(\ltwod)\cap L^{2}(\hzerod).
\end{equation}

To obtain an estimate on the pressure, we use a well-known estimate on the right inverse
of the divergence operator (cf.~\cite{Girault,DuranMusch}), i.e.,
\begin{equation}\label{continfsup}
  \beta \|q\|_{\ltwos} \leq \sup_{\bv{v}\in \hzerods} 
  \frac{(q,\diver{v})}{\|\bv{v}\|_{\hzerods}}, \quad \forall q \in \lzerotwo.
\end{equation}
From \eqref{continfsup} and the linear momentum equation in \eqref{eq:microNS} we get
\[
\beta^2 \int_0^T \|p(s)\|_{\ltwos}^2 ds \lesssim 
\int_0^T \left( \|\bv{u}_{t}(s)\|_{\ltwods}^2 + \|\gradv{u}(s)\|_{\ltwods}^2
  + \|\gradv{u}(s)\|_{\ltwods}^4 +  \|\gradv{w}(s)\|_{\ltwods}^2  +  \|\bv{f}\|_{\ltwods}^2 \right)ds, 
\]
so that, to obtain an estimate on the pressure, we must assume $\bv{u} \in L^4(\hzerod)$ and, in addition, we need an estimate on the time derivative of the linear velocity at least in $L^2(\ltwod)$. This is standard for the Navier-Stokes equations. To obtain it we differentiate with respect to time the equations of conservation of linear and angular momentum. Repeating the steps used to obtain \eqref{energybound} we arrive at the desired estimate.

The existence of weak solutions can be summarized as follows.

\begin{theorem}[Existence of weak solutions]
Let $\bv{f},\bv{g} \in L^2(\ltwod)$, $\bv{u}_0 \in \mathbb{H}$ and $\bv{w}_0 \in \ltwod$. Then there exist
$(\bv{u},\bv{w},p) \in L^{\infty}(\mathbb{H}) \times
L^{\infty}(\ltwod) \times \mathcal{D}'(\Omega_T)$
satisfying \eqref{eq:microNS} in the sense of distributions.
Moreover, $\bv{u}$ and $\bv{w}$ satisfy the energy estimate \eqref{energybound}.
\end{theorem}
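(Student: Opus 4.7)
The plan is a Faedo-Galerkin approximation combined with Aubin-Lions compactness, closely mirroring the classical construction for Navier-Stokes; the only new ingredient is the linear coupling between $\bv{u}$ and $\bv{w}$ through the $\curl{}$ operator, which behaves nicely thanks to \eqref{intbypartscurl}. First I would pick an orthonormal basis $\{\bm{\varphi}_k\}_{k\ge 1}$ of $\mathbb{H}$ made of Stokes eigenfunctions (so that each $\bm{\varphi}_k$ lies in $\mathbb{V}$) and an orthonormal basis $\{\bm{\psi}_k\}_{k\ge 1}$ of $\ltwod$ made of eigenfunctions of the vector Dirichlet Laplacian (so that each $\bm{\psi}_k$ lies in $\hzerod$). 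Let $\mathbb{V}_n$ and $\bv{W}_n$ denote the spans of the first $n$ basis elements, and look for $\bv{u}_n\in C^1([0,T_n];\mathbb{V}_n)$ and $\bv{w}_n\in C^1([0,T_n];\bv{W}_n)$ solving the Galerkin projection of \eqref{eq:microNS} with initial data the $L^2$-orthogonal projections of $\bv{u}_0$ and $\bv{w}_0$. This is an ODE system with quadratic, hence locally Lipschitz, right-hand side, so Picard-Lindel\"of provides a local solution.

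Second, I would reproduce the formal energy estimate at the discrete level: testing the Galerkin linear momentum equation with $\bv{u}_n$ and the Galerkin angular momentum equation with $\bv{w}_n$ and adding eliminates both convective terms by skew-symmetry of $\tril(\bv{u}_n,\cdot,\cdot)$ (valid since $\bv{u}_n\in\mathbb{V}_n\subset\mathbb{V}$), and Young and Poincar\'e inequalities yield \eqref{energybound} uniformly in $n$. This precludes blow-up, extends $(\bv{u}_n,\bv{w}_n)$ to $[0,T]$, and gives, along a subsequence, weak-$\ast$ convergence of $\bv{u}_n$ in $L^\infty(\mathbb{H})$ and weak convergence in $L^2(\mathbb{V})$, together with analogous convergence for $\bv{w}_n$ in $L^\infty(\ltwod)$ and $L^2(\hzerod)$.

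Third, to pass to the limit in the trilinear terms $\tril(\bv{u}_n,\bv{u}_n,\cdot)$ and $\tril(\bv{u}_n,\bv{w}_n,\cdot)$, I would upgrade to strong $L^2(\ltwod)$ convergence via Aubin-Lions. Using \eqref{firstineq} to handle the convective contributions, one bounds $\partial_t\bv{u}_n$ in $L^{4/3}(0,T;\mathbb{V}')$ and $\partial_t\bv{w}_n$ in $L^{4/3}(0,T;\Hd^{-1}(\Omega))$ when $d=3$ (with $L^2$ replacing $L^{4/3}$ when $d=2$); combined with the compact embedding $\hzerod\hookrightarrow\ltwod$, Aubin-Lions delivers the required strong convergence of $\bv{u}_n$ and $\bv{w}_n$ in $L^2(\ltwod)$. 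Passing to the limit first against fixed basis elements $\bm{\varphi}_m,\bm{\psi}_m$, and then by density against arbitrary test functions in $\mathbb{V}$ and $\hzerod$, identifies $(\bv{u},\bv{w})$ as a distributional solution of the first and third equations of \eqref{eq:microNS} when tested against solenoidal or free $\hzerod$ fields, respectively.

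Finally, the pressure is reconstructed via De Rham's theorem: the identity
\[
  \bv{u}_t - \nunot\Delta\bv{u} + (\bv{u}\cdot\nabla)\bv{u} - 2\nu_r\curl{w} - \bv{f} = 0 \quad \text{in } \mathcal{D}'(0,T;\mathbb{V}')
\]
says that the left-hand side, interpreted as a distribution on $\Omega_T$, annihilates divergence-free test fields, hence equals $-\nabla p$ for some $p\in\mathcal{D}'(\Omega_T)$. The energy estimate \eqref{energybound} for $(\bv{u},\bv{w})$ follows from weak lower semicontinuity of the norms applied to the uniform discrete bound. The main obstacle is the strong compactness in Step 3: this is the familiar Navier-Stokes hurdle, and the additional $\bv{w}$-coupling introduces no new nonlinear difficulty because $\curl{w}$ is linear in $\bv{w}$.
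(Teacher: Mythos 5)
The paper offers no proof of its own here---it simply points to \cite[Theorem 1.6.1]{Luka}---and your Faedo--Galerkin construction (spectral bases in $\mathbb{V}$ and $\hzerod$, the coupled energy estimate closed via \eqref{intbypartscurl} and absorption of $4\nu_r(\curl{u}_n,\bv{w}_n)$ into the $4\nu_r\|\bv{w}_n\|^2$ and viscosity terms, Aubin--Lions for the trilinear terms, De Rham for the pressure, and weak lower semicontinuity for \eqref{energybound}) is precisely the standard argument carried out in that reference, so your outline is correct and takes essentially the same route. The only cosmetic point is that with $\bv{f},\bv{g}$ merely $L^2$ in time the Galerkin ODE has a Carath\'eodory rather than continuous right-hand side, so local solvability should be invoked via Carath\'eodory's theorem instead of Picard--Lindel\"of; this changes nothing of substance.
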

\begin{proof}
see \cite[Theorem 1.6.1]{Luka}.
\end{proof}

Just like for the Navier-Stokes equations, uniqueness of solutions of the MNSE is an open issue.

\subsection{Time Discretization}
\label{sub:timedisc}
We introduce $K > 0$ to denote the number of steps, define the time-step as
$\dt = T/K >0$ and set $t^k = k\dt$ for $0\leq k \leq K$. For
$\phi : [0,T] \rightarrow E$, with $E$ being a Banach space, we set $\phi^k = \phi(t^k)$.
A sequence will be denoted by  $\phi^\dt = \lb \phi^k\rb_{k=0}^{K}$ and we introduce the following norms:
\[
  \|\phi^\dt \|_{\ell^\infty(E)} = \max_{0 \leq k \leq K} \|\phi^k\|_E,
  \qquad
  \|\phi^\dt \|_{\ell^2(E)} = \alp \sum_{k=0}^K \dt \|\phi^k\|_E^2 \arp^{\nicefrac{1}{2}}.
\]
We define the backward difference operator
\begin{align}\label{backwarddifferenceop}
  \inc\phi^k = \phi^k - \phi^{k-1},
\end{align}
and set $\inc^2 \phi^k = \inc(\inc \phi^k) = \phi^k - 2\phi^{k-1} + \phi^{k-2}$.

Finally, recall the following discrete Gr\"onwall inequality.
\begin{lemma}[Discrete Gr\"{o}nwall]\label{simplifiedgronwall}
Let $a^\dt$, $b^\dt$, $c^\dt$ and $\gamma^\dt$
be sequences of nonnegative numbers such that
$\tau\gamma_k < 1$ for all $k$, and let $g_0 \geq 0$ be so that the following inequality
holds:
\[
  a_K + \dt \sum_{k=0}^{K} b_k \leq \dt \sum_{k=0}^{K} \gamma_k a_k + \dt \sum_{k=0}^{K} c_k + g_0.
\]
Then
\[
  a_K + \dt \sum_{k=0}^{K} b_k \leq
  \alp \tau \sum_{k=0}^{K} c_k + g_0 \arp
  \exp\alp\tau \sum_{k=0}^{K} \sigma_k \gamma_k\arp,  
\]
where $\sigma_k = (1- \tau \gamma_k)^{-1}$.
\end{lemma}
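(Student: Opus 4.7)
I would first recast the hypothesis in terms of an auxiliary monotone sequence and then iterate a one-step recursion, deferring the exponentialization to a separate elementary inequality at the end.

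Concretely, the plan is to define $A_n = a_n + \tau \sum_{k=0}^n b_k$ and $D_n = \tau \sum_{k=0}^n c_k + g_0$ for $0 \leq n \leq K$. Since $b_k, c_k \geq 0$, both sequences are nondecreasing in $n$, and in particular $D_n \leq D_K$. Reading the hypothesis at every level $n \leq K$ (which is how the lemma is actually applied in practice, and which follows from the same derivation that produced it at level $K$) gives $A_n \leq \tau \sum_{k=0}^n \gamma_k a_k + D_n$. Using $a_k \leq A_k$ and isolating the $k=n$ summand on the left, the assumption $\tau \gamma_n < 1$ lets one divide by $1-\tau\gamma_n$ and arrive at the one-step form
\[
A_n \leq \sigma_n \left( \tau \sum_{k=0}^{n-1} \gamma_k A_k + D_n \right).
\]

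Next I would iterate this in a way that absorbs the $D_n$ into the multiplicative part. Setting $S_n = \tau \sum_{k=0}^{n-1} \gamma_k A_k$, the recursion above yields $S_{n+1} = S_n + \tau \gamma_n A_n \leq \sigma_n S_n + (\sigma_n - 1) D_n$, where I use the identity $1 + \tau \gamma_n \sigma_n = \sigma_n$. Adding $D_K$ to both sides and exploiting $D_n \leq D_K$ collapses this to the clean multiplicative recursion $S_{n+1} + D_K \leq \sigma_n (S_n + D_K)$. Iterating from $n=0$ (where $S_0 = 0$) up to $K$ and applying the one-step bound once more at level $K$ gives
\[
A_K \leq \sigma_K (S_K + D_K) \leq D_K \prod_{k=0}^K \sigma_k.
\]

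Finally, the exponential form is recovered by the scalar inequality $1 + x \leq e^x$: since $\sigma_k - 1 = \tau \gamma_k \sigma_k$, one has $\sigma_k \leq \exp(\tau \gamma_k \sigma_k)$, so $\prod_{k=0}^K \sigma_k \leq \exp\bigl(\tau \sum_{k=0}^K \sigma_k \gamma_k\bigr)$, which combined with the previous display is exactly the stated conclusion once we recall $A_K = a_K + \tau \sum_{k=0}^K b_k$ and $D_K = \tau \sum_{k=0}^K c_k + g_0$.

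The whole argument is elementary; the only mildly delicate point is the trick of adding the \emph{final} $D_K$ (rather than the running $D_n$) to both sides of the recursion, which is what makes the iteration telescope into a single product and relies on the nonnegativity of $c_k$. Everything else is bookkeeping and the standard $1+x \leq e^x$ estimate.
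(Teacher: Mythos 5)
Your argument is correct. Note first that the paper itself gives no proof of this lemma --- it simply cites Heywood--Rannacher and \cite{MR2249024} --- so there is nothing internal to compare against; what you have written is a complete, self-contained version of the standard argument from those references, organized slightly differently (you package the $b$-sum into $A_n$ and telescope a product of the $\sigma_k$, then exponentiate at the very end via $1+x\le e^x$, whereas the classical write-up runs the induction directly on the exponential bound). Two small remarks. First, your claim that $A_n = a_n + \tau\sum_{k=0}^n b_k$ is nondecreasing is false in general, since $a_n$ itself need not be monotone; fortunately you never use it --- the only facts you invoke are $D_n\le D_K$ and $a_k\le A_k$, both of which follow from nonnegativity of $c_k$ and $b_k$ alone, so the slip is harmless and should simply be deleted. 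Second, you are right to flag that the hypothesis must be read as holding at every level $n\le K$: the inequality at the single level $K$ gives no control whatsoever on $a_k$ for $k<K$, so the conclusion cannot follow from it alone, and the cited sources indeed state the hypothesis for all $n$. With that reading, every step checks out: the one-step bound $A_n\le\sigma_n(S_n+D_n)$ uses $\tau\gamma_n<1$ exactly where needed, the identity $1+\tau\gamma_n\sigma_n=\sigma_n$ and the inequality $(\sigma_n-1)D_n\le(\sigma_n-1)D_K$ (valid since $\sigma_n\ge1$) give the multiplicative recursion, and $S_0=0$ makes the telescoping close.
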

\begin{proof}
See \cite{MR2249024,HeyRann}.
\end{proof}
\subsection{Space Discretization}
\label{finitelementspace}

To construct an approximation of the solution to \eqref{eq:microNS} via Galerkin techniques we introduce two families of finite dimensional spaces, $\{\polV_h\}_{h>0}$ and 
$\{ \polQ_h \}_{h>0}$ with $\polV_h \subset \hzerod$ and,
$\polQ_h \subset \hone \cap \lzerotwo$. The space $\polV_h$ will be used to approximate the linear and angular velocities and $\polQ_h$ to approximate the pressure. We require that these spaces are compatible,
in the sense that they satisfy the LBB condition
\begin{equation}\label{discreteinfsup}
 \inf_{0\neq\ptf \in \mathbb{Q}_h} \sup_{0\neq\lvt \in \mathbb{V}_h} 
  \frac{( \ptf,\diver{}\lvt )}{\|\ptf\|_{\ltwods} \|\nabla\lvt\|_{\ltwods} } \geq \beta^*,
\end{equation}
with $\beta^*$ independent of the discretization parameter $h$. In addition, we require that the
spaces have suitable approximation properties, in other words,
there exists a $\po \in \mathbb{N}$ such that for $m \in[0,\po]$,
\[
  \inf_{\ptf \in \polQ_h } \| q - \ptf \|_{\ltwos} \leq C h^m \| q \|_{H^m}
  \quad \forall q \in H^m(\Omega)\cap\lzerotwo,
\]
\[
  \inf_{ \lvt \in \polV_h }
  \alp \| \bv{v} - \lvt \|_{\ltwods} + h \| \bv{v} - \lvt \|_{\hzerods} \arp
  \leq C h^{m+1} \| \bv{v} \|_{\bv{H}^{m+1}},
  \quad \forall \bv{v} \in \bv{H}^{m+1} \cap \hzerod.
\]
Lastly, we assume that the velocity space $\polV_h$ satisfies the following inverse inequality:
\begin{equation}
\label{eq:inverseineq}
  \|\bv{u}_h\|_{\linfds} \leq C \psi(h) \|\bv{u}_h\|_{\hzerods} \quad \forall \bv{u}_h \in \polV_h,
\end{equation}
where $\psi(h) = (1+|\log(h)|)^{\frac{1}{2}}$ if $d=2$ and $\psi(h) = h^{-\frac{1}{2}}$
if $d=3$.
References \cite{ErnGuermond, Girault} provide a comprehensive list of suitable
choices for these spaces.

For a.e.~$t\in [0,T]$ we define the Stokes projection of $(\bv{u}(t),p(t))$ as the 
pair $(\bv{u}_h(t),p_h(t)) \in \polV_h \times \polQ_h$ that solves
\begin{align*}
\left\{
\begin{aligned}
\nunot ( \gradv{u}_h , \nabla\!\lvt ) - ( p_h, \diver{}\lvt ) &= 
\nunot ( \gradv{u} , \nabla\lvt ) - ( p, \diver{}\lvt ) &&\forall \lvt \in \mathbb{V}_h \\
( \ptf, \diver{u}_h ) &= ( \ptf, \diver{u} ) &&\forall \ptf \in \mathbb{Q}_h  \, .
\end{aligned}
\right.
\end{align*}

In addition, we define the elliptic-like projection of $\bv{w}(t)$ as the function
$\bv{w}_h(t) \in \polV_h$ that solves
\begin{align*}
c_1 ( \gradv{w}_h, \gradv{}\avt ) &+ c_2 ( \diver{w}_h,\diver{}\avt)
+ 4 \nu_r ( \bv{w}_h,\avt ) = \\
&=c_1 ( \gradv{w}, \gradv{}\avt ) + c_2 ( \diver{w},\diver{}\avt )
+ 4 \nu_r ( \bv{w},\avt )
\ \ \  \forall \avt \in \mathbb{V}_h \, .
\end{align*}
The properties of the Stokes and elliptic-like projections are summarized in the following;
see for instance \cite{GuerQuart}.

\begin{lemma}[Properties of projectors]
If
$(\bv{u},\bv{w},p) \in [L^\infty(\bv{H}^2(\Omega)\cap\hzerod)]^{2}
\times L^\infty(\hone\cap\lzerotwo)$, then the Stokes and elliptic-like projectors are stable in dimension $d \leq 3$, i.e., 
\begin{equation}
\label{projectionstability}
  \|\bv{u}_h \|_{L^{\infty}(\linfds \cap \mathbf{W}_{3}^{1})}
  + \|\bv{w}_h \|_{L^{\infty}(\linfds\cap\mathbf{W}_{3}^{1}) }  
  + \| p_h \|_{L^\infty(\hones)} \leq C 
  \left(
  \|\bv{u}\|_{L^{\infty}(\htwos)} + \|p\|_{L^{\infty}(\hones)} 
  + \|\bv{w}\|_{L^{\infty}(\htwos)} \right) \,.
\end{equation}
If, in addition,
$(\bv{u},\bv{w},p) \in [L^\infty(\bv{H}^{\po + 1 }(\Omega)\cap\hzerod)]^{2}
\times L^\infty( H^{\po}(\Omega) \cap\lzerotwo)$, then the projections satisfy the following
approximation properties:
\begin{align}\label{approximab}
\begin{aligned}
  \|\bv{u} - \bv{u}_h \|_{L^{\infty}(\ltwods)} 
  + h \|\bv{u} - \bv{u}_h \|_{L^{\infty}(\honeds)}
  + h \|p - p_h\|_{L^{\infty}(\ltwos)} &
  \leq C h^{\po+1} \spatialconstantone(\bv{u},p) \\
  \|\bv{w} - \bv{w}_h \|_{L^{\infty}(\ltwods)} 
  + h \|\bv{w} - \bv{w}_h \|_{L^{\infty}(\honeds)} 
  & \leq C h^{\po+1} \spatialconstanttwo(\bv{w}),
\end{aligned}
\end{align}
where $\spatialconstantone(\bv{u},p) = \|\bv{u}\|_{L^{\infty}(\mathbf{H}^{\po+1})} 
+\|p\|_{L^{\infty}(H^{\po})}$ and $\spatialconstanttwo(\bv{w}) =
\|\bv{w}\|_{L^{\infty}(\mathbf{H}^{\po+1})}.$
\end{lemma}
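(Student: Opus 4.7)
The plan is to establish the approximation estimates \eqref{approximab} first and then deduce the stability bounds \eqref{projectionstability} from them by combining Sobolev embeddings, a stable quasi-interpolant, and the inverse inequality \eqref{eq:inverseineq}. Both projectors act pointwise in time, so I would argue at a fixed $t$ and take the $L^\infty$-in-time norm at the very end.

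For the elliptic-like projection, the bilinear form
$a(\bv{w},\avt) = c_1(\gradv{w},\gradv{}\avt) + c_2(\diver{w},\diver{}\avt) + 4\nu_r(\bv{w},\avt)$
is symmetric and, thanks to $c_1,c_2>0$ and Poincar\'e's inequality, coercive on $\hzerod$, so $\bv{w}_h$ is the $a$-orthogonal projection of $\bv{w}$ onto $\polV_h$. C\'ea's lemma together with the polynomial approximation of $\polV_h$ would yield the $\honeds$-bound on $\bv{w}-\bv{w}_h$ of order $h^{\po}$, and an Aubin--Nitsche duality argument, exploiting $\Hd^2$-regularity of the adjoint problem on the smooth domain $\Omega$, would supply the extra power of $h$ for the $\ltwods$-bound. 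For the Stokes projection $(\bv{u}_h,p_h)$ I would follow the same outline, with the discrete inf-sup condition \eqref{discreteinfsup} replacing coercivity: it yields well-posedness and, together with C\'ea-type arguments, the $\hzerods$-estimate for $\bv{u}-\bv{u}_h$ and the $\ltwos$-estimate for $p-p_h$; a further duality argument using $\Hd^2\times H^1$-regularity of the Stokes problem on smooth $\Omega$ then gains one power of $h$ for the $\ltwods$-bound on the velocity. This step is entirely classical and I would merely cite \cite{GuerQuart, ErnGuermond, Girault} for the complete proofs.

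For the stability \eqref{projectionstability} my main device is the following. The Sobolev embedding $\htwo \hookrightarrow \linfd \cap \mathbf{W}_{3}^{1}(\Omega)$ (valid for $d\leq 3$) bounds the continuous pieces $\bv{u}$, $\bv{w}$ by their $\htwos$-norms, and $\hone$ trivially controls itself for $p$. For the discrete pieces I would insert a stable quasi-interpolant $\Pi_h$ (Scott--Zhang) and split by the triangle inequality, e.g.\
\[
  \|\bv{u}_h\|_{\linfds} \leq \|\bv{u}_h - \Pi_h \bv{u}\|_{\linfds} + \|\Pi_h \bv{u}\|_{\linfds},
\]
where the second summand is at most $C\|\bv{u}\|_{\htwos}$ by stability of $\Pi_h$ and embedding. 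Applying the inverse inequality \eqref{eq:inverseineq} and then the approximation estimate already in hand yields $\|\bv{u}_h - \Pi_h \bv{u}\|_{\linfds} \leq C\psi(h)\|\bv{u}_h - \Pi_h \bv{u}\|_{\hzerods} \leq C\psi(h)\, h\,\|\bv{u}\|_{\htwos}$, and $\psi(h)\,h$ stays bounded (in fact vanishes as $h\to 0$) for $d\leq 3$. The $\mathbf{W}_{3}^{1}$ bound would follow by the same scheme using the standard quasi-uniform inverse estimate $\|\nabla\lvt\|_{\Ld^3} \leq Ch^{-d/6}\|\nabla\lvt\|_{\ltwods}$, and the pressure bound $\|p_h\|_{\hones}$ by the analogous inverse estimate $\|\ptf\|_{\hones}\leq Ch^{-1}\|\ptf\|_{\ltwos}$ on $\polQ_h$.

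The hard part will be the tightness of the three-dimensional $\linfds$ estimate: the inverse factor $\psi(h) = h^{-1/2}$ is exactly compensated by the first-order $\hzerods$-approximation, so the bookkeeping is sharp and genuinely relies on both full $\htwos$-regularity of the exact solution and quasi-uniformity of the mesh. Once that balance is recognized, the remaining checks (stability of $\Pi_h$ and availability of the various inverse estimates on $\polV_h$ and $\polQ_h$) are routine.
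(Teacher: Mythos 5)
Your outline is correct and coincides with the standard argument: the paper itself offers no proof of this lemma, simply deferring to the cited references (e.g.\ \cite{GuerQuart}), and those references establish \eqref{approximab} exactly as you propose (C\'ea/inf--sup plus Aubin--Nitsche duality on the smooth domain) and \eqref{projectionstability} by the same quasi-interpolant--plus--inverse-inequality splitting, with $\psi(h)\,h$ and $h^{-d/6}\,h$ remaining bounded for $d\leq 3$. The only point worth making explicit is that the stability bound uses the approximation estimates at the lower regularity level $m=1$ (i.e.\ $O(h)$ in $\honeds$ under $\htwos\times\hones$ regularity only), which your C\'ea argument delivers but which is not literally the $m=\po$ case recorded in \eqref{approximab}.
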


We introduce the trilinear form $b_h: [\hzerod]^3 \rightarrow \mathbb{R},$
\[
  \tril_h(\bv{u},\bv{v},\bv{w}) = \tril( \bv{u},\bv{v},\bv{w} ) + 
  \frac12 (\diver{u},\bv{v}\cdot\bv{w}), \quad \forall \ \bv{u},\bv{v},\bv{w} \in \hzerod,
\]
and recall that it is consistent, i.e., $\tril_h(\bv{u},\bv{v},\bv{w}) = \tril(\bv{u},\bv{v},\bv{w})$ whenever $\bv{u} \in \polV$, and skew-symmetric
\[
  \tril_h(\bv{u},\bv{v},\bv{v}) = 0,
\]
for all $\bv{u}, \bv{v} \in \hzerod$.
This form satisfies estimates similar to \eqref{firstineq}--\eqref{secondineqstar}, namely
\begin{equation}
\label{firstdiscineq}
\begin{aligned}
  \tril_h(\bv{u}_h,\bv{v}_h,\bv{w}_h) &\leq C
  \|\gradv{u}_h\|_{\ltwods} \|\gradv{v}_h\|_{\ltwods} \|\gradv{w}_h\|_{\ltwods}, 
  &&\forall \bv{u}_h, \bv{v}_h,\bv{w}_h \in \polV_h, \\
  \tril_h(\bv{u}_h,\bv{v},\bv{w}_h) &\leq C
  \|\bv{u}_h\|_{\ltwods} \|\bv{v}\|_{\htwos} \|\gradv{w}_h\|_{\ltwods},
  &&\forall \bv{u}_h,\bv{w}_h \in \mathbb{V}_h, \ \forall \bv{v} \in \htwo,
\end{aligned}
\end{equation}
and
\begin{equation}
\label{fifthdiscineq}
  \tril_h(\bv{u}_h,\bv{v}_h,\bv{w}_h) \leq C
  \|\bv{u}_h\|_{\ltwods} \|\bv{v}_h \|_{(\linfds \cap \mathbf{W}_{3}^{1})}
  \|\gradv{w}_h\|_{\ltwods}, \quad \forall \bv{u}_h, \bv{v}_h, \bv{w}_h \in \mathbb{V}_h.
\end{equation}
Since, by assumption, the space $\polV_h$ satisfies the inverse inequality
\eqref{eq:inverseineq}, then for $d=3$,
\begin{align}
\label{additionalineq}
\begin{aligned}
\tril_h(\bv{u}_h,\bv{v}_h,\bv{w}_h) 
&\leq C h^{-\frac{1}{2}} 
\|\bv{u}_h\|_{\ltwods} \|\gradv{v}_h\|_{\ltwods} \|\gradv{w}_h\|_{\ltwods}
\quad &&\forall \bv{u}_h, \bv{v}_h, \bv{w}_h \in \mathbb{V}_h \\
\tril_h(\bv{u}_h,\bv{v}_h,\bv{w}_h) 
&\leq C h^{- \frac{1}{2}} \, \|\gradv{u}_h\|_{\ltwods} \|\gradv{v}_h\|_{\ltwods} 
\|\bv{w}_h\|_{\ltwods} \quad &&\forall \bv{u}_h, \bv{v}_h, \bv{w}_h \in \mathbb{V}_h 
\end{aligned}
\end{align} 
\section{Description of the First Order Scheme}
\label{sec:Scheme}
To the best of our knowledge, the only work that is concerned with the construction and analysis of a scheme for the MNSE is \cite{ort08}, where a fully discrete penalty projection method for this system is developed and analyzed, and a suboptimal convergence rate is derived.  Our scheme instead possesses optimal approximation properties and requires the solution of a saddle point problem at each time step, which can be done efficiently. However, it can be easily modified to decouple the linear velocity and pressure via an incremental projection method, while maintaining optimal orders of convergence. For brevity this will not be included.

Let us now describe the scheme. The scheme computes
$\{\bv{U}_h^\dt,\bv{W}_h^\dt,P_h^\dt\} \subset \polV_h^2 \times \polQ_h$ meant to approximate, at each time step, the linear and angular velocities and the pressure.
We initialize the scheme by setting
\begin{equation}
\label{eq:Init}
  (\bv{U}_h^0,P_h^0) = (\bv{u}_h^0,p_h^0), \qquad
  \bv{W}_h^0 = \bv{w}_h^0,
\end{equation}
that is, we compute the Stokes and elliptic-like projections of the initial data.

\begin{remark}[Initialization]
The initialization step \eqref{eq:Init} requires that the initial data is regular enough so that the projections are well defined, which from now on we will assume. If this is not the case, \eqref{eq:Init} must be modified and, say, take $L^2$-projections. The analysis below must be accordingly adjusted to take this into account (cf.~\cite{HeyRann}).
\end{remark}

After initialization, for $k=1,\ldots,K$, we march in time in two steps: \\

\noindent \textbf{Linear Momentum:} Compute $(\bv{U}_h^k,P_h^k) \in \polV_h \times \polQ_h$, solution of
\begin{subequations}
\label{diffeqfullydiscv}
  \begin{align}
    \label{subeq:lmom}
    \alp \tfrac{ \inc\bv{U}_h^{k}}\dt, \lvt \arp + \nunot \lp \gradv{U}_h^{k},\gradv{}\lvt \rp
+ \tril_h\lp\bv{U}_h^{k-1},\bv{U}_h^{k},\lvt\rp - \lp P_h^{k}, \diver{}\lvt \rp  
&= 2 \nu_r \lp \curl{W}_h^{k-1},\lvt \rp + \lp \bv{f}^{\,k}, \lvt \rp \, , \\
    \label{subeq:div} \lp \ptf, \diver{U}_h^k\rp &= 0 \, , 
  \end{align}
\end{subequations}
for all $ \lvt \in \mathbb{V}_h$, $\ptf \in \mathbb{Q}_h$.
  
\noindent \textbf{Angular Momentum:} Find $\bv{W}_h^k \in \polV_h$ that solves
\begin{multline} 
\inertiamom  \alp \frac{\inc\bv{W}_h^{k}}\dt,\avt \arp 
+ c_1 \lp \gradv{W}_h^{k}, \nabla \avt \rp 
+ \inertiamom \tril_h\lp\bv{U}_h^{k},\bv{W}_h^{k},\avt\rp + \\
+ c_2 \lp \diver{W}_h^{k},\diver{}\avt \rp + 4 {\nu}_r \lp \bv{W}_h^{k}, \avt \rp  
= 2\nu_r \lp \curl{U}_h^{k}, \avt \rp + \lp \bv{g}^{k},\avt \rp \, , 
\label{diffeqfullydiscw}
\end{multline}
for all $\avt \in \mathbb{V}_h$. \\

Notice that we have decoupled the linear and angular momentum equations by time-lagging of the variables. This scheme is unconditionally stable, as the following result shows.

\begin{proposition}[Unconditional stability of the first order scheme] \label{prop:schstable}
The sequence $\{\bv{U}_h^\dt,\bv{W}_h^\dt,P_h^\dt\} \subset [\polV_h]^2 \times \polQ_h$,
solution of \eqref{diffeqfullydiscv}--\eqref{diffeqfullydiscw}, satisfies
\begin{align}
\label{eq:stabilityestimate}
\begin{split}
\|\bv{U}_h^{K}\|_{\ltwods}^2 &+ (\inertiamom +  4 {\nu}_r \dt ) \|\bv{W}_h^{K}\|_{\ltwods}^2
+ \sum_{k=1}^K \alp \|\inc\bv{U}_h^{k}\|_{\ltwods}^2 + \inertiamom \|\inc\bv{W}_h^{k}\|_{\ltwods}^2
\arp
+ \sum_{k=1}^K \dt \alp \nu \|\gradv{U}_h^{k}\|_{\ltwods}^2 
+ \dt c_1 \|\gradv{W}_h^{k}\|_{\ltwods}^2 \arp \\
&+ 2 \sum_{k=1}^K \dt c_2 \|\diver{W}_h^{k}\|_{\ltwods}^2
\leq \sum_{k=1}^K \dt \alp \frac{C_p^2 \nu_r }{\nu} \|\bv{f}^{\,k}\|_{\ltwods}^2  
+ \frac{C_p^2 \nu_r }{c_1} \|\bv{g}^{k}\|_{\ltwods}^2 \arp 
+ \|\bv{U}_h^{0}\|_{\ltwods}^2 + (\inertiamom + 4 \nu_r \dt) \|\bv{W}_h^{0}\|_{\ltwods}^2.
\end{split}
\end{align}
\end{proposition}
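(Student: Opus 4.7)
The plan is to mimic at the discrete level the formal energy argument behind \eqref{energybound}, paying attention to the decoupling introduced by time-lagging $\bv{W}_h^{k-1}$ in the linear momentum equation. First, I would test \eqref{subeq:lmom} with $\lvt = 2\dt\bv{U}_h^k$ and \eqref{diffeqfullydiscw} with $\avt = 2\dt\bv{W}_h^k$, and use the discrete incompressibility constraint \eqref{subeq:div} with $\ptf = P_h^k$ to eliminate the pressure. Three standard reductions collapse the result: the algebraic identity $2(\inc\phi^k,\phi^k) = \|\phi^k\|_{\ltwods}^2 - \|\phi^{k-1}\|_{\ltwods}^2 + \|\inc\phi^k\|_{\ltwods}^2$ produces the telescoping norms together with the numerical dissipations $\|\inc\bv{U}_h^k\|_{\ltwods}^2$ and $\inertiamom\|\inc\bv{W}_h^k\|_{\ltwods}^2$; the skew-symmetry of $\tril_h$ annihilates the two convective terms $\tril_h(\bv{U}_h^{k-1},\bv{U}_h^k,\bv{U}_h^k)$ and $\tril_h(\bv{U}_h^k,\bv{W}_h^k,\bv{W}_h^k)$; and the integration by parts \eqref{intbypartscurl} rewrites $4\dt\nu_r(\curl{W}_h^{k-1},\bv{U}_h^k) = 4\dt\nu_r(\bv{W}_h^{k-1},\curl{U}_h^k)$, so that both rotation--translation couplings pair $\curl{U}_h^k$ against an angular velocity.

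To absorb the couplings I would split $\nunot = \nu + \nu_r$ and use \eqref{curlineq} to obtain $2\dt\nunot\|\gradv{U}_h^k\|_{\ltwods}^2 \geq 2\dt\nu\|\gradv{U}_h^k\|_{\ltwods}^2 + 2\dt\nu_r\|\curl{U}_h^k\|_{\ltwods}^2$. A balanced Young's inequality $4\dt\nu_r(\curl{U}_h^k,\bv{\zeta}) \leq \dt\nu_r\|\curl{U}_h^k\|_{\ltwods}^2 + 4\dt\nu_r\|\bv{\zeta}\|_{\ltwods}^2$, applied to each of $\bv{\zeta} = \bv{W}_h^{k-1}$ and $\bv{\zeta} = \bv{W}_h^k$, then absorbs the two curl contributions exactly into the viscous term and leaves the zero-order remainders $4\dt\nu_r\|\bv{W}_h^{k-1}\|_{\ltwods}^2 + 4\dt\nu_r\|\bv{W}_h^k\|_{\ltwods}^2$. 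The forcing terms $2\dt(\bv{f}^{\,k},\bv{U}_h^k)$ and $2\dt(\bv{g}^{k},\bv{W}_h^k)$ are controlled by Poincar\'e plus Young and absorbed into the remaining halves $\dt\nu\|\gradv{U}_h^k\|_{\ltwods}^2$ and $\dt c_1\|\gradv{W}_h^k\|_{\ltwods}^2$ of the dissipation.

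The delicate bookkeeping concerns the $8\dt\nu_r\|\bv{W}_h^k\|_{\ltwods}^2$ provided by the zero-order term on the left of \eqref{diffeqfullydiscw}. After summing from $k=1$ to $K$, the remainder $4\dt\nu_r\|\bv{W}_h^k\|_{\ltwods}^2$ cancels half of this contribution, and the telescoping $\sum_{k=1}^K 4\dt\nu_r\bigl(\|\bv{W}_h^k\|_{\ltwods}^2 - \|\bv{W}_h^{k-1}\|_{\ltwods}^2\bigr) = 4\dt\nu_r\bigl(\|\bv{W}_h^K\|_{\ltwods}^2 - \|\bv{W}_h^0\|_{\ltwods}^2\bigr)$ produces precisely the extra $4\nu_r\dt$ in the coefficient $(\inertiamom + 4\nu_r\dt)$ attached to $\|\bv{W}_h^K\|_{\ltwods}^2$ on the left and $\|\bv{W}_h^0\|_{\ltwods}^2$ on the right, as advertised in \eqref{eq:stabilityestimate}.

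I expect this last piece of bookkeeping to be the main obstacle: because of the time-lag of $\bv{W}_h$ in \eqref{subeq:lmom} one cannot cancel $\nu_r\curl{W}$ against $\nu_r\curl{U}$ directly as in the continuous calculation, and stability must lean on the dissipative $4\nu_r\|\bv{W}_h^k\|_{\ltwods}^2$ term of \eqref{diffeqfullydiscw} together with the telescoping structure just described. Since the resulting estimate is linear in the unknowns, no discrete Gr\"onwall inequality is required, and the bound is unconditional in both $\dt$ and $h$.
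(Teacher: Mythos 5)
Your proposal is correct and follows essentially the same argument as the paper: test with $2\dt\bv{U}_h^k$ and $2\dt\bv{W}_h^k$, use the identity $2a(a-b)=a^2-b^2+(a-b)^2$, skew-symmetry of $\tril_h$, the integration by parts \eqref{intbypartscurl} together with \eqref{curlineq} and Young's inequality, and then telescope in $k$; your bookkeeping of the $(\inertiamom+4\nu_r\dt)$ coefficient is exactly how the per-step inequality \eqref{auxexp} arises. The only (immaterial) discrepancy is in the constant multiplying the forcing terms, where your Young step yields $C_p^2/\nu$ and $C_p^2/c_1$ rather than the paper's $C_p^2\nu_r/\nu$ and $C_p^2\nu_r/c_1$.
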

\begin{proof}
Set $\lvt = 2 \dt \bv{U}_h^{k}$ in \eqref{diffeqfullydiscv} and $\avt = 2 \dt \bv{W}_h^{k}$
in \eqref{diffeqfullydiscw}, respectively, and add the results.
Use the identity $2a(a-b) = a^2 - b^2 + (a-b)^2$, the integration by parts formula
\eqref{intbypartscurl}, estimate \eqref{curlineq} and Young's inequality to obtain
\begin{align}
\label{auxexp}
\begin{aligned}
 \|\bv{U}_h^{k}\|_{\ltwods}^2 &+ (\inertiamom +  4 {\nu}_r \dt ) \|\bv{W}_h^{k}\|_{\ltwods}^2
+ \|\inc\bv{U}_h^{k}\|_{\ltwods}^2 + \inertiamom \|\inc\bv{W}_h^{k}\|_{\ltwods}^2
+ \dt \nu \|\gradv{U}_h^{k}\|_{\ltwods}^2 
+ \dt c_1 \|\gradv{W}_h^{k}\|_{\ltwods}^2
+ 2 \dt c_2 \|\diver{W}_h^{k}\|_{\ltwods}^2 \\
&\leq \|\bv{U}_h^{k-1}\|_{\ltwods}^2 + (\inertiamom + 4 \nu_r \dt) \|\bv{W}_h^{k-1}\|_{\ltwods}^2 +
\frac{C_p^2 \nu_r \dt}{\nu} \|\bv{f}^{\,k}\|_{\ltwods}^2  
+ \frac{C_p^2 \nu_r \dt}{c_1} \|\bv{g}^{k}\|_{\ltwods}^2.
\end{aligned}
\end{align}
Adding over $k$ we obtain the desired estimate \eqref{eq:stabilityestimate}.
\end{proof}

\section{A Priori Error Analysis}
\label{sec:erroranalysis}
Here we perform an error analysis of scheme \eqref{diffeqfullydiscv}--\eqref{diffeqfullydiscw}
and show that this method has optimal convergence properties. The analysis is based on 
energy arguments and hinges on the unconditional stability result of Proposition~\ref{prop:schstable}. The arguments used are rather standard for the Navier-Stokes equations, the main novelty and difficulty being the coupling with the angular momentum equation, which requires lengthy and careful computations.

We shall assume, for the sake of simplicity,
that the solution to \eqref{eq:microNS}--\eqref{IBdata} satisfies:
\begin{equation}\label{initialregass}
\bv{u}, \bv{w} \in \mathcal{C}^1([0,T],\Hd^{\po+1}(\Omega)) \ \ \ \text{and} \ \ \ \bv{u}_{tt},
\bv{w}_{tt} \in L^2([0,T],\ltwod) .
\end{equation}
These assumptions will be enough to derive optimal convergence rates for the linear and angular velocities. If we want to do the same with the pressure we will require the additional regularity:
\begin{align}
\label{additionalreg}
\bv{u}_{tt}, \bv{w}_{tt} \in \mathcal{C}([0,T],\Hd^{\po+1}(\Omega)) . 
\end{align} 
These assumptions are standard in the error analysis of incompressible flows (cf. \cite{MarTem}). 

The first step in the error analysis is to analyze the consistency of the method. To do so, we proceed as it is customary in the analysis of evolutionary problems (cf.~\cite{MR2249024}) and split the errors
\[
\bv{E}^k = \bv{u}^k-\bv{U}_h^k, \qquad
\ew{E}^k = \bv{w}^k-\bv{W}_h^k, \qquad
e^k  = p^k-P_h^k,
\]
into the so-called interpolation and approximation errors via the Stokes and elliptic projections
of \S\ref{finitelementspace}, i.e.,
\begin{align}
\label{wheelerdecomp2}
\begin{aligned}
&\peu^k = \bv{u}^k-\bv{u}_h^k, \ \  
&\pew^k = \bv{w}^k-\bv{w}_h^k, \ \ \ \ 
&r^k = p^k-p_h^k \, , \\
&\bv{E}_{h}^k = \bv{u}_h^k-\bv{U}_h^k, \ \ 
&\ew{E}_h^k = \bv{w}_h^k-\bv{W}_h^k, \ \ \ \ 
&e_{h}^k = p_h^k-P_h^k.
\end{aligned}
\end{align}
The interpolation errors $(\bv{S}^\dt,\bv{R}^\dt,r^\dt)$ are controlled by means of \eqref{approximab}, so that the next step is to derive an energy estimate for the approximation errors $(\bv{E}_h^\dt,\mathcal{E}_h^\dt,e_h^\dt)$ which is a slight variation of that one obtained for $(\bv{U}_h^\dt,\bv{W}_h^\dt,P_h^\dt)$ in \eqref{eq:stabilityestimate}. 
\subsection{Error estimates for the Linear and Angular Velocities}
\label{sub:errorestvelocities}
The approximation errors $(\bv{E}_h^\dt,\mathcal{E}_h^\dt,e_h^\dt)$ satisfy the following energy identity:
\begin{align}
\label{globalerrorstructure}
\begin{split}
\|\bv{E}_{h}^{k}\|_{\ltwods}^2 &+ (\inertiamom +8 \dt {\nu}_r) \|\ew{E}_{h}^{k}\|_{\ltwods}^2 
- \|\bv{E}_{h}^{k-1}\|_{\ltwods}^2 - \inertiamom \|\ew{E}_{h}^{k-1}\|_{\ltwods}^2
+ 2 \dt \nunot \|\gradv{E}_{h}^{k}\|_{\ltwods}^2 \\
&+ 2 \dt c_1 \|\nabla\ew{E}_{h}^{k}\|_{\ltwods}^2 
+ \|\inc\bv{E}_{h}^{k}\|_{\ltwods}^2 + \|\inc\ew{E}_{h}^{k}\|_{\ltwods}^2 
+ 2 \dt c_2 \|\diver{}\ew{E}_{h}^{k}\|_{\ltwods}^2  = \sum_{i=1}^6 A_i 
\end{split}
\end{align}
with
\begin{align*}
\begin{split}
&A_1 = 2 \dt \tril_{h}\!\lp \bv{U}_h^{k-1},\bv{U}_h^{k}, \bv{E}_{h}^{k} \rp 
- 2 \dt \tril_{h}\!\lp\bv{u}^k,\bv{u}^k,\bv{E}_{h}^{k}\rp \\
&A_2 = 2\inertiamom\dt\tril_{h}\! \lp \bv{U}_h^{k},\bv{W}_h^{k}, \ew{E}_{h}^{k} \rp 
- 2\inertiamom\dt\tril_{h}\!\lp\bv{u}^k,\bv{w}^k,\ew{E}_{h}^{k}\rp \\
&A_3 = 4\dt \nu_r \lp \curl{w}^{k} - \curl{W}_h^{k-1},\bv{E}_{h}^{k} \rp \\
&A_4 = 4 \dt \nu_r \lp \curl{E}^k,\ew{E}_{h}^{k}\rp \\
&A_5 = - 2 \lp \inc\bv{S}^k , \bv{E}_{h}^{k} \rp 
- 2 \inertiamom \lp\inc\bv{R}^k ,\ew{E}_{h}^{k}\rp \\
&A_6 = 2 \dt\lp\mathcal{R}_\bv{u}^k,\bv{E}_{h}^{k}\rp
+ 2 \inertiamom \dt \lp \mathcal{R}_\bv{w}^k,\ew{E}_{h}^{k}\rp,
\end{split}
\end{align*}
where $\mathcal{R}_\bv{u}^k$ and $\mathcal{R}_\bv{w}^k$ are integral representations 
of Taylor remainders (see for instance \cite{MarTem}), i.e.
\begin{align}
\label{truncationerrors}
\begin{split}
\mathcal{R}_\bv{u}^k = \frac{1}{\dt} \int_{t^{k-1}}^{t^{k}} (t^{k-1}-s) \bv{u}_{tt}(s) \, ds \, \ \
\text{and} \ \ \ \mathcal{R}_\bv{w}^k = \frac{1}{\dt} \int_{t^{k-1}}^{t^{k}} (t^{k-1}-s)
\bv{w}_{tt}(s) \, ds \, .
\end{split}
\end{align}
The main difficulty, and our focus from now on, is to estimate the residual terms $A_i$, $i=1,\ldots,6$. 
\begin{theorem}[Error estimate on velocities] \label{maintheorem}
Assume \eqref{initialregass}, then
\begin{align}
\label{finalLinfL2error}
\|\bv{E}^{\dt}\|_{L^{\infty}(\ltwods)} + 
\|\ew{E}^{\dt}\|_{L^{\infty}(\ltwods)} +
h \left( \|\nabla\bv{E}^{\dt}\|_{L^2(\ltwods)}
    + \|\nabla\ew{E}^\dt\|_{L^2(\ltwods)}
    \right) \leq C( \dt + h^{\po+1} )
\end{align}
whenever
\begin{align}\label{Kvalue}
\dt \leq \frac{1}{K} \ \ \text{with} \ \ 
K \simeq \text{ max} \lb{\frac{M}{\nu_0},\frac{\mathcal{M}\inertiamom^2}{c_1},
\frac{M\inertiamom^2}{c_1},\frac{\nu_r^2}{c_1},\frac{\nu_r^2}{\nu_0}} \rb \, , 
\end{align}
where $M$ and $\mathcal{M}$ satisfy
\begin{align}
\label{eq:Mestiamtes}
\begin{split}
\sup_{\Omega_T} \lp \|\gradv{u}_h\|_{\Ld^3}+ \|\bv{u}_h\|_{\linfds}\rp^2 
+ \sup_{\Omega_T} |\bv{u}|^2 &\leq M < \infty \, , \\
\sup_{\Omega_T} \lp \|\gradv{w}_h\|_{\Ld^3}+ \|\bv{w}_h\|_{\linfds}\rp^2 + \sup_{\Omega_T}
|\bv{w}|^2  &\leq \mathcal{M} < \infty \, .  
\end{split}
\end{align}
\end{theorem}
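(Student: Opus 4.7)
The plan is to estimate each of the six residuals $A_i$ in the identity \eqref{globalerrorstructure}, sum over $k=1,\dots,K$, apply the discrete Gr\"onwall inequality (Lemma~\ref{simplifiedgronwall}) to the resulting bound, and finally invoke the triangle inequality together with the projection approximation estimates \eqref{approximab} to pass from the approximation errors $(\bv{E}_h^\dt,\ew{E}_h^\dt)$ to the total errors $(\bv{E}^\dt,\ew{E}^\dt)$. In every $A_i$ the strategy is the same: route the worst factor onto a gradient-type norm that can be absorbed by the dissipative terms $2\dt\nunot\|\gradv{E}_{h}^{k}\|_{\ltwods}^2$ and $2\dt c_1\|\nabla\ew{E}_{h}^{k}\|_{\ltwods}^2$ on the LHS via Young's inequality, leaving only $L^2$-type quantities to be closed by Gr\"onwall.

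For the convective residual $A_1$, I would add and subtract $\tril_h(\bv{u}^k,\bv{U}_h^k,\bv{E}_h^k)$, decompose
\[
\bv{u}^k-\bv{U}_h^{k-1}=\bv{S}^{k-1}+\bv{E}_h^{k-1}+\int_{t^{k-1}}^{t^k}\bv{u}_t(s)\,ds, \qquad \bv{u}^k-\bv{U}_h^k=\bv{S}^k+\bv{E}_h^k,
\]
and then write $\bv{U}_h^k=\bv{u}_h^k-\bv{E}_h^k$ together with the skew-symmetry $\tril_h(\cdot,\bv{E}_h^k,\bv{E}_h^k)=0$. Interpolation and time-increment pieces are estimated via \eqref{fifthdiscineq} combined with the projection stability \eqref{projectionstability} (supplying $M$) and yield $O(h^{2(\po+1)}+\dt^2)$ contributions; the remaining piece, of the form $C\dt M\nunot^{-1}\|\bv{E}_h^{k-1}\|_{\ltwods}^2$, becomes a Gr\"onwall coefficient $CM/\nunot$ and explains the threshold $M/\nunot$ in \eqref{Kvalue}. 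The residual $A_2$ is handled identically but couples both error sequences, producing Gr\"onwall coefficients $\mathcal{M}\inertiamom^2/c_1$ and $M\inertiamom^2/c_1$. It is important to use only \eqref{firstdiscineq}--\eqref{fifthdiscineq} here, not the inverse-inequality estimates \eqref{additionalineq}, otherwise \eqref{Kvalue} would depend on $h$. The curl-coupling residuals are routine: for $A_3$, split $\bv{w}^k-\bv{W}_h^{k-1}=\int_{t^{k-1}}^{t^k}\bv{w}_t(s)\,ds+\bv{R}^{k-1}+\ew{E}_h^{k-1}$ and apply Cauchy--Schwarz with \eqref{curlineq} and Young's, yielding the coefficient $\nu_r^2/\nunot$; $A_4$ is analogous and gives $\nu_r^2/c_1$. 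The interpolation-in-time residual $A_5$ is controlled using $\|\inc\bv{S}^k\|_{\ltwods}^2\leq \dt\int_{t^{k-1}}^{t^k}\|\bv{S}_t\|_{\ltwods}^2\,ds$ (analogously for $\bv{R}$), whose $k$-sum telescopes into $\|\bv{S}_t\|_{L^2(\ltwods)}^2\lesssim h^{2(\po+1)}$ via \eqref{approximab} and \eqref{initialregass}. Finally, the Taylor remainders \eqref{truncationerrors} satisfy $\|\mathcal{R}_\bv{u}^k\|_{\ltwods}^2\leq C\dt\int_{t^{k-1}}^{t^k}\|\bv{u}_{tt}\|_{\ltwods}^2\,ds$, so $A_6$ contributes $O(\dt^2)$ after summation.

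After inserting these bounds into \eqref{globalerrorstructure} and summing, one obtains an inequality of the form $a_K+\dt\sum_k b_k\leq \dt\sum_k\gamma_k a_k+C(\dt^2+h^{2(\po+1)})$ with $a_k=\|\bv{E}_h^k\|_{\ltwods}^2+\|\ew{E}_h^k\|_{\ltwods}^2$, $b_k=\nunot\|\gradv{E}_h^k\|_{\ltwods}^2+c_1\|\nabla\ew{E}_h^k\|_{\ltwods}^2$, and a $k$-independent Gr\"onwall weight $\gamma_k\lesssim \max\{M/\nunot,\mathcal{M}\inertiamom^2/c_1,M\inertiamom^2/c_1,\nu_r^2/c_1,\nu_r^2/\nunot\}$. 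The smallness hypothesis \eqref{Kvalue} guarantees $\dt\gamma_k<1$, so Lemma~\ref{simplifiedgronwall} gives $\|\bv{E}_h^\dt\|_{\ell^\infty(\ltwods)}^2+\|\ew{E}_h^\dt\|_{\ell^\infty(\ltwods)}^2+\dt\sum_k b_k\leq C(\dt^2+h^{2(\po+1)})$; combined with \eqref{approximab}, which contributes $\|\bv{S}^\dt\|_{L^\infty(\ltwods)}+\|\bv{R}^\dt\|_{L^\infty(\ltwods)}\lesssim h^{\po+1}$ and $h(\|\nabla\bv{S}^\dt\|_{L^2(\ltwods)}+\|\nabla\bv{R}^\dt\|_{L^2(\ltwods)})\lesssim h^{\po+1}$, the triangle inequality delivers \eqref{finalLinfL2error}. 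The main obstacle I anticipate is the bookkeeping for $A_1$ and $A_2$: the trilinear differences must be decomposed so that the only factor taking an $L^\infty\cap\bv{W}^1_3$-norm is a \emph{projection} of $\bv{u}$ or $\bv{w}$ (controlled by $M,\mathcal{M}$ through \eqref{projectionstability}) rather than a discrete iterate, which is precisely what prevents the Gr\"onwall threshold $K$ in \eqref{Kvalue} from depending on the mesh size $h$.
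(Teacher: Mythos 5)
Your proposal is correct and follows essentially the same route as the paper: the same splitting of the convective residuals so that skew-symmetry kills the $\tril_h(\cdot,\bv{E}_h^k,\bv{E}_h^k)$ terms, the same use of \eqref{fifthdiscineq} with the projection stability \eqref{projectionstability} (and not the inverse-inequality bounds \eqref{additionalineq}) so that the Gr\"onwall threshold is $h$-independent, and the same treatment of the curl couplings, interpolation increments, and Taylor remainders leading to the coefficients in \eqref{Kvalue}. Nothing essential is missing.
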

\begin{proof}
It suffices to provide bounds for the terms $A_i$ above and employ the
discrete Gr\"onwall lemma. To begin with, notice that
\begin{align}
\label{convectivemanipulation}
\begin{split}
&\tril_h(\bv{U}_h^{k-1},\bv{U}_h^{k},\lvt) - \tril_h(\bv{u}^k,\bv{u}^k,\lvt)=
- \tril_h(\inc \bv{u}^{k},\bv{u}^k,\lvt) 
- \tril_h(\bv{u}^{k-1},\bv{S}^k,\lvt)\\  
&\ \ \ \ - \tril_h(\bv{S}^{k-1},\bv{u}_h^k, \lvt) 
- \tril_h(\bv{E}^{k-1},\bv{u}_h^k, \lvt)
+ \tril_h(\bv{E}_h^{k-1}, \bv{E}_h^{k}, \lvt) 
- \tril_h(\bv{u}_h^{k-1}, \bv{E}_h^{k}, \lvt)  \ \ \forall \lvt \in \mathbb{V}_h \, , 
\end{split} 
\end{align}
and
\begin{align}
\label{manipconvectivespin}
\begin{split}
\tril_h\lp\bv{U}_h^{k},\bv{W}_h^{k},\avt \rp
&- \tril_h\lp\bv{u}^k,\bv{w}^k,\avt\rp = 
- \tril_h\lp\bv{u}^{k},\bv{R}^{k}, \avt \rp 
+ \tril_h\lp\bv{E}_h^{k},\ew{E}_h^{k},\avt \rp \\
&- \tril_h\lp\bv{E}_h^{k},\bv{w}_h^{k},\avt \rp
- \tril_h\lp\bv{S}^{k},\bv{w}_h^{k},\avt \rp
- \tril_h\lp\bv{u}_h^{k},\ew{E}_h^{k},\avt \rp \ \ \ \ \  \forall \avt \in \mathbb{V}_h \, . 
\end{split} 
\end{align}

Set $\lvt = 2\dt \bv{E}_{h}^{k}$ in (\ref{convectivemanipulation}). Since $\tril_h$
is skew-symmetric the last two terms vanish, and we can rewrite $A_1$ as:
\begin{align}
\nonumber
\begin{split}
A_1 &= - 2 \dt \tril_{h}\!\lp \inc \bv{u}^k,\bv{u}^k,\bv{E}_{h}^{k}\rp 
- 2 \dt \tril_{h}\!\lp\bv{u}^{k-1},\peu^k,\bv{E}_{h}^{k}\rp
- 2 \dt\tril_h(\bv{S}^{k-1},\bv{u}_h^k, \bv{E}_{h}^{k}) 
- 2 \dt \tril_h(\bv{E}_h^{k-1},\bv{u}_h^k, \bv{E}_{h}^{k}) \\
&= A_{11} + A_{12} + A_{13} + A_{14} \, . 
\end{split}
\end{align}
The functions $\inc \bv{u}^k$ and $\bv{u}^{k-1}$ are solenoidal so that the consistency of $\tril_h$ yields control on $A_{11}$ and $A_{12}$:
\begin{align*}
A_{11} &= 2 \dt \tril_{h}\lp \inc \bv{u}^k,\bv{E}_{h}^{k}, \bv{u}^k\rp
\leq 2 \dt \|\inc \bv{u}^k\|_{\ltwods} \|\gradv{E}_{h}^{k}\|_{\ltwods} 
\|\bv{u}^k\|_{\linfds}
\leq \frac{\nunot \dt}{9}  \|\gradv{E}_{h}^{k}\|_{\ltwods}^2 
+ \frac{9 M \dt}{\nunot}\|\inc\bv{u}^k\|_{\ltwods}^2 \\
A_{12} &= - 2 \dt \tril_{h}\lp\bv{u}^{k-1},\peu^k,\bv{E}_{h}^{k}\rp 
\leq 2 \dt \ \|\bv{u}^{k-1}\|_{\linfds} \|\gradv{E}_{h}^{k}\|_{\ltwods} \|\peu^k\|_{\ltwods} 
\leq \frac{\nunot \dt}{9} \|\gradv{E}_{h}^{k}\|_{\ltwods}^2 + \frac{9 M \dt}{\nunot}
\|\peu^k\|_{\ltwods}^2,
\end{align*}
where we have used \eqref{thirdineq} and \eqref{fourthineq}. By \eqref{initialregass}, we deduce 
\begin{align}\label{incrementestimate}
\|\inc\bv{u}^k\|_{\ltwods}^2 \leq \dt \int_{t_{k-1}}^{t_k} \|\bv{u}_t\|_{\ltwods}^2 \, dt.
\end{align}
The terms $A_{13}$ and $A_{14}$ can be estimated via \eqref{fifthdiscineq} as follows:
\begin{align*}
A_{13} + A_{14} &\leq \frac{2 \nunot \dt}{9} \|\gradv{E}_{h}^{k}\|_{\ltwods}^2 
+ \frac{9 M \dt^2}{\nunot} \|\peu^{k-1}\|_{\ltwods}^2 
+ \frac{9 M \dt^2}{\nunot} 
\|\bv{E}_{h}^{k-1}\|_{\ltwods}^2.
\end{align*}

Set $\avt = 2 \dt \ew{E}_{h}^{k}$ in (\ref{manipconvectivespin}).
We rewrite $A_2$ as
\begin{align*}
A_2 = - 2\inertiamom\dt\tril_{h}\!\lp\bv{u}^k,\bv{R}^k,\ew{E}_{h}^{k}\rp
- 2\dt\inertiamom\tril_{h}\!\lp\bv{E}_{h}^k,\bv{w}_h^k,\ew{E}_{h}^{k}\rp
- 2\dt\inertiamom\tril_{h}\!\lp \bv{S}^k,\bv{w}_h^k,\ew{E}_{h}^{k}\rp 
= A_{21} + A_{22} + A_{23} \, . 
\end{align*}
Since $\bv{u}^k$ is solenoidal the bound on $A_{21}$ proceeds as that of $A_{12}$, 
whereas \eqref{fifthdiscineq} gives control on $A_{22}$ and $A_{23}$:
\[
  A_2 \leq \frac{3c_1 \dt}{7} \|\gradv{}\ew{E}_{h}^{k}\|_{\ltwods}^2 
  + \frac{7 M \inertiamom^2 \dt}{c_1} \left(
    \|\pew^{k}\|_{\ltwods}^2 + \|\bv{E}_{h}^k\|_{\ltwods}^2 + \|\peu^k\|_{\ltwods}^2
  \right).
\]

The bound on $A_3$ begins by noticing that 
$ \bv{w}^{k} - \bv{W}_h^{k-1} = \inc \bv{w}^{k} + \pew^{k-1} + \ew{E}_{h}^{k-1} $. The integration by parts formula \eqref{intbypartscurl} then yields
\[
  A_3 = 4\dt \nu_r \lp \inc\bv{w}^{k},\curl{E}_{h}^{k} \rp 
+ 4\dt \nu_r \lp \pew^{k-1},\curl{E}_{h}^{k} \rp 
+ 4\dt \nu_r \lp \ew{E}_{h}^{k-1},\curl{E}_{h}^{k} \rp \, ,
\]
whence
\begin{align*}
\begin{split}
A_3 &\leq 4\dt \nu_r \|\inc\bv{w}^{k}\|_{\ltwods} \|\gradv{E}_{h}^{k}\|_{\ltwods} 
+ 4\dt \nu_r \|\pew^{k-1}\|_{\ltwods} \|\gradv{E}_{h}^{k}\| 
+ 4\dt \nu_r \|\ew{E}_{h}^{k-1}\|_{\ltwods} \|\gradv{E}_{h}^{k}\|_{\ltwods}  \\
&\leq \frac{\nunot \dt}{3} \|\gradv{E}_{h}^{k}\|_{\ltwods}^2 
+ \frac{36 \nu_r^2\dt}{\nunot} \|\inc\bv{w}^{k}\|_{\ltwods}^2 
+ \frac{36 \nu_r^2\dt}{\nunot} \|\pew^{k-1}\|_{\ltwods}^2
+ \frac{36 \nu_r^2\dt}{\nunot} \|\ew{E}_{h}^{k-1}\|_{\ltwods}^2.
\end{split}
\end{align*}
The term $\|\inc\bv{w}^{k}\|_{\ltwods}^2$ can be bounded
similarly to \eqref{incrementestimate}.

The bound on $A_4$ follows the same lines as those of $A_3$:
\[
A_4
= 4 \dt \nu_r \lp \peu^k,\curl{}\ew{E}_{h}^{k}\rp 
+ 4 \dt \nu_r \lp \bv{E}_{h}^k,\curl{}\ew{E}_{h}^{k}\rp
\leq \frac{2 c_1 \dt}{7} \|\gradv{}\ew{E}_{h}^{k}\|_{\ltwods}^2
+ \frac{28\nu_r^2\dt}{c_1} \|\peu^k\|_{\ltwods}^2
+ \frac{28\nu_r^2\dt}{c_1} \|\bv{E}_{h}^k\|_{\ltwods}^2.
\]
The last two terms $A_5$ and $A_6$ can be easily bounded as follows
\begin{align*}
A_5 &= - 2 \lp \inc\bv{S}^k , \bv{E}_{h}^{k} \rp 
- 2\inertiamom \lp\inc\bv{R}^k ,\ew{E}_{h}^{k}\rp 
\leq \frac{\nunot \dt}{9} \|\gradv{E}_{h}^{k}\|_{\ltwods}^2 
+ \frac{9 C_p^2}{\nunot \dt} \|\inc\bv{S}^k\|_{\ltwods}^2
+ \frac{c_1 \dt}{7} \|\gradv{}\ew{E}_{h}^{k}\|_{\ltwods}^2 
+ \frac{7 C_p^2 \inertiamom^2 }{c_1 \dt} \|\inc\bv{R}^k\|_{\ltwods}^2 \, , 
\end{align*}
and
\begin{align*}
A_6 = 2 \dt \lp\mathcal{R}_\bv{u}^k,\bv{E}_{h}^{k}\rp
+ 2 \dt \inertiamom \lp \mathcal{R}_\bv{w}^k,\ew{E}_{h}^{k}\rp
\leq \frac{\nunot \dt}{9} \|\gradv{E}_{h}^{k}\|_{\ltwods}^2 
+ \frac{9 C_p^2\dt}{\nunot} \|\mathcal{R}_\bv{u}^k\|_{\ltwods}^2
+ \frac{c_1 \dt}{7} \|\gradv{}\ew{E}_{h}^{k}\|_{\ltwods}^2 
+ \frac{7 C_p^2 \inertiamom^2 \dt }{c_1} \|\mathcal{R}_\bv{w}^k\|_{\ltwods}^2.
\end{align*}

The interpolation errors are bounded by \eqref{approximab} which, in conjunction with
\eqref{initialregass}, also implies
\begin{align}\label{errordifferences}
\begin{split}
\|\inc\bv{S}^k\|_{\ltwods} + h \|\inc\gradv{S}^k\|_{\ltwods}
&\leq C \dt \, h^{\po+1} \spatialconstantone(\bv{u}_t,p_t) \\
\|\inc\bv{R}^k\|_{\ltwods} + h \|\inc\gradv{R}^k\|_{\ltwods} &\leq C \dt \, h^{\po+1}
\spatialconstanttwo(\bv{w}_t).
\end{split}
\end{align}
Assumption \eqref{initialregass} also gives an estimate on the truncation errors $\mathcal{R}_\bv{u}^k $
and $\mathcal{R}_\bv{w}^k$,
\begin{align}\label{residualestimate}
\begin{split}
\|\mathcal{R}_\bv{u}^k \|_{\ltwods}^2 \leq \frac{\dt}{3} \int_{t^{k-1}}^{t^{k}}
\|\bv{u}_{tt}\|_{\ltwods}^2 \, dt \ \ \ , \ \ \
\|\mathcal{R}_\bv{w}^k \|_{\ltwods}^2 \leq \frac{\dt}{3} \int_{t^{k-1}}^{t^{k}}
\|\bv{w}_{tt}\|_{\ltwods}^2 \, dt.
\end{split}
\end{align}
Inserting the estimates above for $A_i$, $1 \leq i \leq 6$, into \eqref{globalerrorstructure},  summing in $k$ and application of Gr\"onwall inequality concludes the proof.
\end{proof}

\begin{remark}[Smallness assumption on $\dt$]
Condition \eqref{Kvalue} does not depend on the space discretization parameter $h$. It does depend, however, on the constants $M$ and $\mathcal{M}$ defined in \eqref{eq:Mestiamtes}; this is standard for Navier-Stokes. In addition, this estimate depends on the quotients $\nu_r^2/\nunot$ and $\nu_r^2/c_1$, which gives an indication of how strong the coupling between linear and angular
momentum is. 
\end{remark}

\subsection{Error Estimates for the Discrete Time Derivative}
When dealing with the Navier-Stokes equations, it is well-known (see, for instance, \cite{GuerQuart}) that in order to derive optimal error estimates for the pressure in $\ell^2(\ltwo)$ one must first obtain estimates on the discrete time derivative of the velocity, which is the main reason for the additional regularity requested in \eqref{additionalreg}. Our analysis is no exception, and this is additionally complicated by the fact that we must obtain error estimates for the derivatives of the linear and angular velocities. However, it is important to point out that it is possible derive an error estimate.

Applying the increment operator $\inc$, defined in \eqref{backwarddifferenceop}, to the equations that govern the approximation errors and proceeding as in the proof of 
Proposition~\ref{prop:schstable} we conclude that the discrete time derivatives
$\dt^{-1} \inc \bv{E}_h^\dt$ and $\dt^{-1} \inc \mathcal{E}_h^\dt$ satisfy an energy identity  
similar to \eqref{globalerrorstructure}, namely, 
\begin{align}
\label{seconddifference}
\begin{split}
\|\dt^{-1}\inc\bv{E}_h^{k}\|_{\ltwods}^2 &+ (\inertiamom + 8 {\nu}_r \dt) \|\dt^{-1}\inc\ew{E}_{h}^{k}\|_{\ltwods}^2 
- \|\dt^{-1}\inc\bv{E}_h^{k-1}\|_{\ltwods}^2 
- \inertiamom \|\dt^{-1}\inc\ew{E}_{h}^{k-1}\|_{\ltwods}^2 
+ 2 \nunot \dt \| \dt^{-1} \inc\nabla\bv{E}_{h}^{k} \|_{\ltwods}^2  \\
&+ 2 c_1 \dt  \|\dt^{-1} \inc\gradv{}\ew{E}_h^{k}\|_{}^2 
+ \|\dt^{-1}\inc^2\bv{E}_h^{k}\|_{\ltwods}^2
+ \inertiamom \|\dt^{-1}\inc^2\ew{E}_{h}^{k}\|_{\ltwods}^2 
+ 2 c_2 \dt \|\dt^{-1} \diver{} \inc\ew{E}_h^{k}\|_{\ltwods}^2
= \sum_{i = 1}^5 F_i
\end{split}
\end{align}
where  
\begin{align*}
F_1 &= 2 \tril_h\lp\bv{U}_h^{k-1},\bv{U}_h^{k},\dt^{-1} \inc\bv{E}_h^{k}\rp
- 2 \tril_h\lp\bv{u}^k,\bv{u}^k,\dt^{-1} \inc\bv{E}_h^{k}\rp
- 2 \tril_h\lp\bv{U}_h^{k-2},\bv{U}_h^{k-1},\dt^{-1} \inc\bv{E}_h^{k}\rp
+ 2 \tril_h\lp\bv{u}^{k-1},\bv{u}^{k-1},\dt^{-1} \inc\bv{E}_h^{k}\rp \\
F_2 &= 2 \inertiamom\tril_h\lp\bv{U}_h^{k},\bv{W}_h^{k},\dt^{-1} \inc\ew{E}_h^{k} \rp
- 2 \inertiamom\tril_h\lp\bv{u}^k,\bv{w}^k,\dt^{-1} \inc\ew{E}_h^{k}\rp
- 2 \inertiamom \tril_h\lp\bv{U}_h^{k-1},\bv{W}_h^{k-1},\dt^{-1} \inc\ew{E}_h^{k}\rp
+ 2 \inertiamom \tril_h\lp\bv{u}^{k-1},\bv{w}^{k-1},\dt^{-1} \inc\ew{E}_h^{k}\rp \\
F_3 &= 4 \nu_r \lp \curl{}\inc\bv{w}^{k} - \curl{}\inc\bv{W}_h^{k-1},\dt^{-1} \inc\bv{E}_h^{k} \rp +
4 \nu_r \lp \curl{}\inc\bv{u}^k - \curl{}\inc\bv{U}_h^{k},\dt^{-1} \inc\ew{E}_h^{k}\rp \\
F_4 &= - 2 \dt^{-1} \lp\inc^2\bv{S}^k, \dt^{-1}\inc\bv{E}_h^{k}\rp 
- 2 \inertiamom \dt^{-1} \lp \inc^2\bv{R}^k,\dt^{-1} \inc\ew{E}_h^{k}\rp \\
F_5 &= 2 \lp\inc\mathcal{R}_\bv{u}^k,\dt^{-1} \inc\bv{E}_h^{k}\rp 
+ 2\inertiamom \lp\inc\mathcal{R}_\bv{w}^k,\dt^{-1} \inc\ew{E}_h^{k}\rp.
\end{align*} 

A bound on these terms then yields a bound on the discrete time derivatives. This is the content of the following result.

\begin{theorem}[Error estimate for the discrete time derivatives]\label{thm:boudnderiv}
Assume \eqref{additionalreg}. If
\begin{equation}
\label{smallnessreq}
h^{-1/2} \|\bv{E}_h^\dt\|_{\ell^\infty(\ltwods)} \qquad \text{and} \qquad
h^{-1/2} \|\ew{E}_h^\dt\|_{\ell^\infty(\ltwods)}
\end{equation}
are sufficiently small, then
\begin{align}\label{linfltwoesterror}
  \left\| \frac{\inc\bv{E}_h^\dt}{\dt} \right \|_{\ell^\infty(\ltwods)}
  + \left\| \frac{\inc\ew{E}_h^\dt}{\dt} \right\|_{\ell^\infty(\ltwods)}
  \leq C( \dt + h^l).
\end{align}
\end{theorem}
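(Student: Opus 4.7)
My plan is to mimic the proof of Theorem \ref{maintheorem} applied to the energy identity \eqref{seconddifference} for the discrete time derivatives $\dt^{-1}\inc \bv{E}_h^k$ and $\dt^{-1}\inc \ew{E}_h^k$. Concretely, I would bound each residual $F_i$ by a sum of three types of contributions: (i) terms absorbable into the dissipative left-hand side, namely $2\nunot\dt\|\dt^{-1}\nabla\inc \bv{E}_h^k\|_{\ltwods}^2$ or $2c_1\dt\|\dt^{-1}\nabla\inc \ew{E}_h^k\|_{\ltwods}^2$; (ii) Gr\"onwall-type terms proportional to $\|\dt^{-1}\inc \bv{E}_h^{k-j}\|_{\ltwods}^2 + \|\dt^{-1}\inc \ew{E}_h^{k-j}\|_{\ltwods}^2$ for $j\in\{0,1\}$; and (iii) a consistency residue of size $\dt^2 + h^{2\po+2}$. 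Summing over $k$ and applying Lemma \ref{simplifiedgronwall}, under the condition $\dt\gamma_k<1$ guaranteed by \eqref{Kvalue}, then yields \eqref{linfltwoesterror}. The missing initial value $\dt^{-1}\|\inc \bv{E}_h^1\|_{\ltwods}$ is obtained by testing the scheme at $k=1$ against $\dt^{-1}\inc \bv{E}_h^1$, using \eqref{eq:Init} and the regularity \eqref{additionalreg} at $t=0$.

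\textbf{Easy residuals.} The linear residuals $F_3, F_4, F_5$ involve first-order time differences of the interpolation errors $\inc^2 \bv{S}^k, \inc^2 \bv{R}^k$, the Taylor remainders $\inc \mathcal{R}_{\bv{u}}^k, \inc \mathcal{R}_{\bv{w}}^k$, and of $\inc \bv{u}^k, \inc \bv{w}^k$. Each such time-difference gains an additional factor $\dt$ via integrals of $\bv{u}_{tt}, \bv{w}_{tt}$ or of $\bv{S}_t, \bv{R}_t$ over two contiguous intervals. Combined with the projection estimates \eqref{approximab} applied to $\bv{u}_{tt}, \bv{w}_{tt}$ under \eqref{additionalreg}, and with \eqref{intbypartscurl}--\eqref{curlineq} to absorb the curl coupling in $F_3$ into the diffusion on the left-hand side, they produce exactly the contributions of types (i)--(iii) above.

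\textbf{Main obstacle: nonlinear residuals $F_1, F_2$.} The core difficulty lies here. I would apply the decomposition \eqref{convectivemanipulation} at both time levels $k$ and $k-1$ and subtract, producing six pieces in which one of the slots of $\tril_h$ carries a time-difference operator $\inc$. Most pieces — those paired with smooth factors such as $\bv{u}^k, \bv{u}_h^k$, or with $\inc \bv{S}^k$ — fall under the regime of \eqref{firstdiscineq}--\eqref{fifthdiscineq} and yield type (i)--(iii) contributions via \eqref{additionalreg}. The genuinely dangerous pieces are those carrying two approximation errors, typified by
\begin{equation*}
|\tril_h(\bv{E}_h^{k-2},\inc \bv{E}_h^k,\dt^{-1}\inc \bv{E}_h^k)| \leq C h^{-\frac{1}{2}} \|\bv{E}_h^{k-2}\|_{\ltwods}\,\|\gradv{}\inc \bv{E}_h^k\|_{\ltwods}\,\|\dt^{-1}\inc \bv{E}_h^k\|_{\ltwods},
\end{equation*}
where we invoked the inverse estimate \eqref{additionalineq}. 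The factor $h^{-\frac{1}{2}}\|\bv{E}_h^{k-2}\|_{\ltwods}$ is precisely the smallness hypothesis \eqref{smallnessreq}, so that a Young inequality absorbs this term into the dissipation $2\nunot\dt\|\dt^{-1}\gradv{}\inc \bv{E}_h^k\|_{\ltwods}^2$, leaving only a controlled multiple of $\|\dt^{-1}\inc \bv{E}_h^k\|_{\ltwods}^2$ to be handled by Gr\"onwall. The treatment of $F_2$ is entirely analogous, using \eqref{manipconvectivespin} and the smallness of $h^{-1/2}\|\ew{E}_h^\dt\|_{\ell^\infty(\ltwods)}$, with $c_1$ in place of $\nunot$. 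I expect this absorption step — bookkeeping which of the three slots carries the inverse inequality and checking that the resulting factor is precisely one of those assumed small in \eqref{smallnessreq} — to be the main technical obstacle of the proof.
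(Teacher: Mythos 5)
Your proposal follows essentially the same route as the paper: the same energy identity \eqref{seconddifference}, the same differencing of \eqref{convectivemanipulation}--\eqref{manipconvectivespin} at consecutive time levels, and the same absorption of the quadratic-in-error terms via the inverse estimate \eqref{additionalineq} and the smallness hypothesis \eqref{smallnessreq}, followed by Gr\"onwall. One small remark: your representative dangerous term $\tril_h(\bv{E}_h^{k-2},\inc\bv{E}_h^k,\dt^{-1}\inc\bv{E}_h^k)$ actually vanishes by skew-symmetry, and the genuinely problematic pieces (the paper's $F_{15}$ and $F_{22}$) carry the time increment in the \emph{first} argument, e.g.\ $2\dt\,\tril_h(\dt^{-1}\inc\bv{E}_h^{k-1},\bv{E}_h^{k-1},\dt^{-1}\inc\bv{E}_h^k)$ -- but these are handled by precisely the mechanism you describe.
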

\begin{proof}
In analogy to Theorem~\ref{maintheorem}, it suffices to bound the
residual terms $\lb F_{i}\rb_{i=1}^5$. The proof is rather technical and tedious, and consists of careful manipulations of these five terms. Take the difference of \eqref{convectivemanipulation} for two consecutive time-steps, which allows us to write $F_1$ as the sum of six terms $\lb F_{1i}\rb_{i=1}^6$:
\begin{align*}
F_{11} &= - 2 \tril(\inc \bv{u}^{k},\bv{u}^k,\dt^{-1} \inc\bv{E}_h^{k}) + 2 \tril(\inc
\bv{u}^{k-1},\bv{u}^{k-1},\dt^{-1} \inc\bv{E}_h^{k})\\
F_{12} &= - 2 \tril(\bv{u}^{k-1},\bv{S}^k,\dt^{-1} \inc\bv{E}_h^{k}) + 2
\tril(\bv{u}^{k-2},\bv{S}^{k-1},\dt^{-1} \inc\bv{E}_h^{k}) \\
F_{13} &= - 2 \tril_h(\bv{S}^{k-1},\bv{u}_h^k, \dt^{-1} \inc\bv{E}_h^{k}) + 2
\tril_h(\bv{S}^{k-2},\bv{u}_h^{k-1},\dt^{-1} \inc\bv{E}_h^{k})\\
F_{14} &= - 2 \tril_h(\bv{E}_h^{k-1},\bv{u}_h^k, \dt^{-1} \inc\bv{E}_h^{k}) + 2
\tril_h(\bv{E}_h^{k-2},\bv{u}_h^{k-1}, \dt^{-1} \inc\bv{E}_h^{k})\\
F_{15} &= 2 \tril_h(\bv{E}_h^{k-1}, \bv{E}_h^{k}, \dt^{-1} \inc\bv{E}_h^{k}) 
- 2 \tril_h(\bv{E}_h^{k-2}, \bv{E}_h^{k-1}, \dt^{-1} \inc\bv{E}_h^{k})\\
F_{16} &= - 2 \tril_h(\bv{u}_h^{k-1}, \bv{E}_h^{k}, \dt^{-1} \inc\bv{E}_h^{k}) + 2
\tril_h(\bv{u}_h^{k-2}, \bv{E}_h^{k-1}, \dt^{-1} \inc\bv{E}_h^{k}) \, . 
\end{align*}
Using the linearity and skew-symmetry of the trilinear form, these six terms can be appropriately rewritten and bounded using \eqref{firstineq}-\eqref{thirdineq} and \eqref{firstdiscineq}-\eqref{additionalineq} to get
\begin{align*}
F_{11} &= 2 \tril(\inc \bv{u}^{k},\dt^{-1} \inc\bv{E}_h^{k},\inc\bv{u}^k) 
+ 2 \tril(\inc^2 \bv{u}^{k},\dt^{-1} \inc\bv{E}_h^{k},\bv{u}^{k-1}) \\
&\leq \frac{\nunot \dt}{14} \|\dt^{-1} \inc\gradv{E}_h^{k}\|_{\ltwods}^2 
+ \frac{C}{\nunot \dt} \|\inc \gradv{u}^{k}\|_{\ltwods}^4 
+ \frac{\nunot \dt}{14} \|\dt^{-1} \inc\gradv{E}_h^{k}\|_{\ltwods} 
+ \frac{M}{\nunot \dt} \|\inc^2 \bv{u}^{k}\|_{\ltwods}^2 \, , \\
F_{12} &= 2 \tril(\bv{u}^{k-1},\dt^{-1} \inc\bv{E}_h^{k},\inc\bv{S}^k) + 2
\tril(\inc\bv{u}^{k-1},\dt^{-1} \inc\bv{E}_h^{k},\bv{S}^{k-1}) \\ 
&\leq \frac{\nunot \dt}{14} \|\dt^{-1} \inc\gradv{E}_h^{k}\|_{\ltwods}^2 
+ \frac{M}{\nunot \dt}  \|\inc\bv{S}^k\|_{\ltwods}^2 
+ \frac{\nunot \dt}{14} \|\dt^{-1} \inc\gradv{E}_h^{k}\|_{\ltwods}^2 
+ \frac{C}{\nunot \dt} \|\inc\gradv{u}^{k-1}\|_{\ltwods}^2 \|\gradv{S}^{k-1}\|_{\ltwods}^2 \, , \\
F_{13} &= - 2 \tril_h(\inc\bv{S}^{k-1},\bv{u}_h^k, \dt^{-1} \inc\bv{E}_h^{k}) - 2
\tril_h(\bv{S}^{k-2},\inc\bv{u}_h^{k-1},\dt^{-1} \inc\bv{E}_h^{k}) \\
&\leq \frac{\nunot \dt}{14} \|\dt^{-1} \inc\gradv{E}_h^{k}\|_{\ltwods}^2 
+ \frac{M}{\nunot \dt} \|\inc\bv{S}^{k-1}\|_{\ltwods}^2
+ \frac{\nunot \dt}{14} \|\dt^{-1} \inc\gradv{E}_h^{k}\|_{\ltwods}^2 
+ \frac{C}{\nunot \dt} \|\inc\gradv{u}_h^{k-1}\|_{\ltwods}^2 \|\gradv{S}^{k-2}\|_{\ltwods}^2 \, , \\
F_{14} &= - 2 \dt \tril_h(\dt^{-1}\inc\bv{E}_h^{k-1},\bv{u}_h^k, \dt^{-1} \inc\bv{E}_h^{k}) - 2
\tril_h(\bv{E}_h^{k-2},\inc\bv{u}_h^{k}, \dt^{-1} \inc\bv{E}_h^{k}) \\ 
&\leq \frac{\nunot \dt}{14} \|\dt^{-1} \inc\gradv{E}_h^{k}\|_{\ltwods}^2 
+ \frac{M\dt^2}{\nunot \dt} \|\dt^{-1}\inc\bv{E}_h^{k-1}\|_{\ltwods}^2
+ \frac{\nunot \dt}{14} \|\dt^{-1} \inc\gradv{E}_h^{k}\|_{\ltwods}^2 
+ \frac{C}{\nunot \dt}
\|\inc\gradv{u}_h^{k}\|_{\ltwods}^2 \|\gradv{E}_h^{k-2}\|_{\ltwods}^2 \, , \\
F_{15} &= 2 \dt \tril_h(\dt^{-1} \inc\bv{E}_h^{k-1},\bv{E}_h^{k-1},\dt^{-1} \inc\bv{E}_h^{k}) 
\leq  \frac{C \|\bv{E}_h^{k-1}\|_{\ltwods}}{h^{\nicefrac{1}{2}}} \dt
\alp  \|\dt^{-1} \inc\gradv{E}_h^{k-1}\|_{\ltwods}^2 +  \|\dt^{-1} \inc\gradv{E}_h^{k}\|_{\ltwods}^2
\arp \, , \\
F_{16} &= - 2 \tril_h(\inc\bv{u}_h^{k-1}, \bv{E}_h^{k}, \dt^{-1} \inc\bv{E}_h^{k}) 
\leq \frac{\nunot \dt}{14} \|\dt^{-1} \inc\gradv{E}_h^{k}\|_{\ltwods}^2 
+ \frac{C}{\nunot \dt} \|\inc\gradv{u}_h^{k-1}\|_{\ltwods}^2 \|\gradv{E}_h^{k}\|_{\ltwods}^2 \, .
\end{align*}
Similarly, applying $\inc$ to \eqref{manipconvectivespin}, $F_2$ can be expressed as the sum of five terms $\lb F_{2i}\rb_{i=1}^5$:
\begin{align*}
F_{21} &= - 2 \inertiamom \tril_h\lp\bv{u}^{k},\bv{R}^{k}, \dt^{-1} \inc\ew{E}_h^{k} \rp 
+ 2 \inertiamom \tril_h\lp\bv{u}^{k-1},\bv{R}^{k-1}, \dt^{-1} \inc\ew{E}_h^{k} \rp \, , \\
F_{22} &= 2 \inertiamom \tril_h\lp\bv{E}_h^{k},\ew{E}_h^{k},\dt^{-1} \inc\ew{E}_h^{k} \rp
- 2 \inertiamom \tril_h\lp\bv{E}_h^{k-1},\ew{E}_h^{k-1},\dt^{-1} \inc\ew{E}_h^{k} \rp \, , \\
F_{23} &= - 2 \inertiamom \tril_h\lp\bv{E}_h^{k},\bv{w}_h^{k},\dt^{-1} \inc\ew{E}_h^{k} \rp
+ 2 \inertiamom \tril_h\lp\bv{E}_h^{k-1},\bv{w}_h^{k-1},\dt^{-1} \inc\ew{E}_h^{k} \rp \, , \\
F_{24} &= - 2 \inertiamom \tril_h\lp\bv{S}^{k},\bv{w}_h^{k},\dt^{-1} \inc\ew{E}_h^{k} \rp
+ 2 \inertiamom \tril_h\lp\bv{S}^{k-1},\bv{w}_h^{k-1},\dt^{-1} \inc\ew{E}_h^{k} \rp \, , \\
F_{25} &= - 2 \inertiamom \tril_h\lp\bv{u}_h^{k},\ew{E}_h^{k},\dt^{-1} \inc\ew{E}_h^{k} \rp 
+ 2 \inertiamom \tril_h\lp\bv{u}_h^{k-1},\ew{E}_h^{k-1},\dt^{-1} \inc\ew{E}_h^{k} \rp \, .
\end{align*}
We now bound each of these terms separately 
\begin{align*}
F_{21} &= - 2 \inertiamom\tril_h\lp \inc\bv{u}^{k},\bv{R}^{k}, \dt^{-1} \inc\ew{E}_h^{k} \rp 
+ 2 \inertiamom \tril_h\lp \bv{u}^{k-1},\inc\bv{R}^{k}, \dt^{-1} \inc\ew{E}_h^{k} \rp \\
&\leq \frac{c_1 \dt}{12} \|\dt^{-1} \inc\gradv{}\ew{E}_h^{k}\|_{\ltwods}^2 
+ \frac{C \inertiamom^2}{c_1 \dt} \|\inc\gradv{u}^{k}\|_{\ltwods}^2
\|\gradv{R}^{k}\|_{\ltwods}^2
+ \frac{c_1 \dt}{12} \|\dt^{-1} \inc\gradv{}\ew{E}_h^{k}\|_{\ltwods}^2 
+ \frac{M\inertiamom^2}{c_1 \dt} \|\inc\bv{R}^{k}\|_{\ltwods}^2 \, , \\
F_{22} &= 2 \inertiamom \dt \tril_h\lp \dt^{-1}\inc\bv{E}_h^{k},\ew{E}_h^{k}, \dt^{-1}
\inc\ew{E}_h^{k} \rp 
\leq \frac{C \|\ew{E}_h^{k}\|_{\ltwods}}{h^{\nicefrac{1}{2}}} \inertiamom \dt
\alp  \|\dt^{-1}\inc\gradv{E}_h^{k}\|_{\ltwods}^2 + \|\dt^{-1}
\inc\gradv{}\ew{E}_h^{k}\|_{\ltwods}^2 \arp \, , \\
F_{23} &= - 2 \inertiamom\tril_h\lp \bv{E}_h^{k},\inc\bv{w}_h^{k},\dt^{-1} \inc\ew{E}_h^{k} \rp
+ 2 \inertiamom \dt \tril_h\lp \dt^{-1} \inc\bv{E}_h^{k},\bv{w}_h^{k-1},\dt^{-1} \inc\ew{E}_h^{k}
\rp \\
&\leq \frac{c_1 \dt}{12} \|\dt^{-1} \inc\gradv{}\ew{E}_h^{k}\|_{\ltwods}^2
+ \frac{C \inertiamom^2}{c_1 \dt} \|\inc\gradv{w}_h^{k}\|_{\ltwods}^2
\|\gradv{E}_h^{k}\|_{\ltwods}^2  
+ \frac{c_1 \dt}{12} \|\dt^{-1} \inc\gradv{}\ew{E}_h^{k}\|_{\ltwods}^2 
+ \frac{\mathcal{M}\inertiamom^2\dt^2}{c_1 \dt} \|\dt^{-1} \inc\bv{E}_h^{k}\|_{\ltwods}^2 \, , \\
F_{24} &= - 2 \inertiamom \tril_h\lp\inc\bv{S}^{k},\bv{w}_h^{k},\dt^{-1} \inc\ew{E}_h^{k} \rp
- 2 \inertiamom \tril_h\lp\bv{S}^{k-1},\inc\bv{w}_h^{k},\dt^{-1} \inc\ew{E}_h^{k} \rp \\ 
&\leq \frac{c_1 \dt}{12} \|\dt^{-1} \inc\gradv{}\ew{E}_h^{k}\|_{\ltwods}^2
+ \frac{\mathcal{M} \inertiamom^2}{\varepsilon_2} \|\inc\bv{S}^{k}\|_{\ltwods}^2
+ \frac{c_1 \dt}{12} \|\dt^{-1} \inc\gradv{}\ew{E}_h^{k}\|_{\ltwods}^2 
+ \frac{C \inertiamom^2}{c_1 \dt} \|\inc\gradv{w}_h^{k}\|_{\ltwods}^2
\|\gradv{S}^{k-1}\|_{\ltwods}^2 \, , \\
F_{25} &= - 2 \inertiamom \tril_h\lp \inc\bv{u}_h^{k},\ew{E}_h^{k},\dt^{-1} \inc\ew{E}_h^{k} \rp 
- 2 \inertiamom \tril_h\lp\bv{u}_h^{k-1},\inc\ew{E}_h^{k},\dt^{-1} \inc\ew{E}_h^{k} \rp \\
&\leq \frac{c_1 \dt}{12} \|\dt^{-1} \inc\gradv{}\ew{E}_h^{k}\|_{\ltwods}^2 
+ \frac{C \inertiamom^2}{c_1 \dt} \|\inc\gradv{u}_h^{k}\|_{\ltwods}^2
\|\gradv{}\ew{E}_h^{k}\|_{\ltwods}^2
+ \frac{c_1 \dt}{12} \|\dt^{-1} \inc\gradv{}\ew{E}_h^{k}\|_{\ltwods}^2 
+ \frac{M\inertiamom^2}{c_1 \dt} \|\inc\ew{E}_h^{k}\|_{\ltwods}^2 \, .
\end{align*}
By virtue of \eqref{intbypartscurl}, $F_3$ can be estimated as follows:
\begin{align*}
F_3 &= 4 \nu_r \lp \inc^2\bv{w}^{k} + \inc\bv{R}^{k-1} + \inc\ew{E}_h^{k-1},\dt^{-1}
\curl{}\inc\bv{E}_h^{k} \rp 
+ 4 \nu_r \lp \inc\bv{S}^k + \inc\bv{E}_h^{k},\dt^{-1} \curl{}\inc\ew{E}_h^{k}\rp \\
&\leq \frac{3}{14}\nunot \dt \|\dt^{-1} \inc\gradv{}\bv{E}_h^{k}\|_{\ltwods}^2 
+ \frac{56 \nu_r^2}{\dt \nunot } 
\alp \|\inc^2\bv{w}^{k}\|_{\ltwods}^2 
+ \|\inc\bv{R}^{k-1}\|_{\ltwods}^2 +
\|\inc\ew{E}_h^{k-1}\|_{\ltwods}^2 \arp \\
&+ \frac{c_1 \dt}{6} \|\inc\gradv{}\ew{E}_h^{k}\|_{\ltwods} 
+ \frac{48 \nu_r^2}{\dt c_1} 
\alp \|\inc\bv{S}^k\|_{\ltwods}^2 + \|\inc\bv{E}_h^{k}\|_{\ltwods}^2 \arp \, . 
\end{align*}
The last two terms $F_4$ and $F_5$ require no further manipulation and result in 
\begin{align*}
F_4 &\leq \frac{\nunot \dt}{14} \|\dt^{-1}\inc\gradv{E}_h^{k}\|_{\ltwods}^2  +
\frac{14 C_p^2\dt^{-2}}{\nunot \dt} \|\inc^2\bv{S}^k\|_{\ltwods}^2 
+ \frac{c_1 \dt}{12} \|\dt^{-1}\inc\gradv{}\ew{E}_h^{k}\|_{\ltwods}^2  
+ \frac{12 C_p^2 \dt^{-3}\inertiamom^2}{c_1} \|\inc^2\bv{R}^k\|_{\ltwods}^2 \, , \\
F_5 &\leq \frac{\nunot \dt}{14} \|\dt^{-1}\inc\gradv{E}_h^{k}\|_{\ltwods}^2  
+ \frac{14 C_p^2}{\nunot \dt} \|\inc\mathcal{R}_\bv{u}^k\|_{\ltwods}^2
+ \frac{c_1 \dt}{12} \|\dt^{-1}\inc\gradv{}\ew{E}_h^{k}\|_{\ltwods}^2  
+ \frac{12 C_p^2 \inertiamom^2}{c_1 \dt} \|\inc\mathcal{R}_\bv{w}^k\|_{\ltwods}^2 \, .
\end{align*}
Collecting all the estimates for $\|\dt^{-1}\inc\gradv{E}_h^{k}\|_{\ltwods}^2$ and $\|\dt^{-1}\inc\gradv{}\ew{E}_h^{k}\|_{\ltwods}^2 $, and using assumption \eqref{smallnessreq}, we get
\[
\frac{2 C \|\bv{E}_h^k\|_{\ltwods}}{h^{1/2}}  \dt +
\frac{C \inertiamom \|\ew{E}_h^k\|_{\ltwods} }{h^{1/2}} \dt \leq \nunot \dt
\qquad
\frac{2 C \inertiamom \|\ew{E}_h^k\|_{\ltwods} }{h^{1/2}}  \dt \leq c_1 \dt. 
\]
These conditions allow for cancellation of the problematic terms $F_{15}$ and $F_{22}$
with the fifth and sixth terms on the left hand side of \eqref{seconddifference}. Finally, summation of the energy identity \eqref{seconddifference} and application of Gr\"onwall inequality lead to \eqref{linfltwoesterror}.
\end{proof}

\begin{remark}[Smallness assumption] The error estimate \eqref{finalLinfL2error} shows that \eqref{smallnessreq} is actually a condition of the form 
\begin{align}\label{sortofCFL}
\dt h^{-1/2} \leq C_{s} < \infty
\end{align}
for a small enough constant $C_s$.
This requirement is not a special characteristic of our method but rather a recurrent feature in the analysis of schemes for the Navier-Stokes equations. See, for instance \cite{GuerQuart,HeyRann}.
\end{remark}

\subsection{Error Estimates for the Pressure}
\label{errorestimatespressure}
The control on the derivatives of the velocities provided by Theorem~\ref{thm:boudnderiv} enables us to obtain error estimates for the pressure. To do so, it is crucial that the discrete spaces are compatible in the sense of \eqref{discreteinfsup}. This is the idea behind the following result.

\begin{theorem}[Error estimate for the pressure] If \eqref{additionalreg} and \eqref{linfltwoesterror} are valid, then the following estimate holds
\begin{equation}
\label{pressureerror}
  \| e^\dt \|_{\ell^2(L^2)} \leq C \left( \dt + h^\po \right).
\end{equation}
\end{theorem}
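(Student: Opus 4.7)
The plan is to decompose the pressure error in the standard Stokes-projection way: $e^k = r^k + e_h^k$, with $r^k = p^k - p_h^k$ and $e_h^k = p_h^k - P_h^k$. The interpolation piece is disposed of immediately by \eqref{approximab}, which under \eqref{initialregass} yields $\|r^\dt\|_{\ell^2(\ltwos)} \lesssim h^{\po}$. Thus the problem reduces to proving $\|e_h^\dt\|_{\ell^2(\ltwos)} \lesssim \dt + h^{\po}$.

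For this I would invoke the discrete inf-sup condition \eqref{discreteinfsup} applied to $e_h^k \in \polQ_h$ and derive a consistency relation for $(e_h^k, \diver{}\lvt)$ with $\lvt \in \polV_h$. Testing the continuous linear momentum equation at $t = t^k$ against $\lvt$, subtracting \eqref{subeq:lmom}, and using the defining identity of the Stokes projection to cancel the leading viscous and pressure contributions, one obtains
\begin{align*}
(e_h^k, \diver{}\lvt) &= \nunot \lp \gradv{E}_h^k, \nabla\lvt \rp + \lp \tfrac{\inc\bv{E}_h^k}{\dt} + \tfrac{\inc\peu^k}{\dt} - \mathcal{R}_{\bv{u}}^k, \lvt \rp \\
&\quad + \bigl[ \tril_h(\bv{u}^k,\bv{u}^k,\lvt) - \tril_h(\bv{U}_h^{k-1},\bv{U}_h^k,\lvt) \bigr] - 2 \nu_r \lp \curl{w}^k - \curl{W}_h^{k-1}, \lvt \rp,
\end{align*}
where $\mathcal{R}_{\bv{u}}^k$ is the time-Taylor residual from \eqref{truncationerrors}. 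Dividing by $\|\nabla \lvt\|_{\ltwods}$ and taking the supremum then bounds $\beta^* \|e_h^k\|_{\ltwos}$ by the sum of the norms of the right-hand-side terms.

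The next step is to bound each right-hand-side contribution by a constant times $\|\nabla \lvt\|_{\ltwods}$, using Poincar\'e's inequality for the $L^2$ pairings. The convective discrepancy is treated by the six-term expansion \eqref{convectivemanipulation} together with the trilinear estimates \eqref{firstineq}--\eqref{secondineqstar} and \eqref{firstdiscineq}--\eqref{fifthdiscineq}. The angular-coupling term is handled via $\bv{w}^k - \bv{W}_h^{k-1} = \inc\bv{w}^k + \pew^{k-1} + \ew{E}_h^{k-1}$ and the integration-by-parts identity \eqref{intbypartscurl}, mimicking the treatment of $A_3$ in Theorem \ref{maintheorem}. The factor $\dt^{-1}\inc\bv{E}_h^k$ is controlled only in $\ell^\infty(\ltwods)$ via Theorem \ref{thm:boudnderiv}; the interpolation increment $\dt^{-1}\inc\peu^k$ is handled by \eqref{errordifferences}, and the truncation $\mathcal{R}_{\bv{u}}^k$ by \eqref{residualestimate} together with the regularity \eqref{additionalreg}.

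The last step is to square the resulting inequality, multiply by $\dt$, and sum over $k = 1, \ldots, K$. The viscous, convective, and angular-coupling contributions reduce to sums of the form $\sum_k \dt \lp \|\gradv{E}_h^k\|_{\ltwods}^2 + \|\ew{E}_h^{k-1}\|_{\ltwods}^2 + \cdots \rp$, which are $O(\dt^2 + h^{2\po})$ by Theorem \ref{maintheorem} combined with \eqref{approximab}. The time-increment contribution is bounded using $K \dt = T$ and $\|\dt^{-1} \inc\bv{E}_h^\dt\|_{\ell^\infty(\ltwods)} \lesssim \dt + h^{\po}$ from Theorem \ref{thm:boudnderiv}. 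The residual sum is $O(\dt^2)$ by \eqref{additionalreg}, and the interpolation-increment sum is $O(h^{2(\po+1)})$ by \eqref{errordifferences}. Combining these with the bound on $r^\dt$ and dividing by $\beta^*$ produces \eqref{pressureerror}. I expect the main obstacle to be the careful bookkeeping of the six convective subterms, in particular those quadratic in approximation errors such as $\tril_h(\bv{E}_h^{k-1}, \bv{E}_h^k, \lvt)$, which must be split so that one factor absorbs into $\|\nabla \lvt\|_{\ltwods}$ and the remaining product sums to the advertised order under the smallness condition \eqref{smallnessreq} inherited from Theorem \ref{thm:boudnderiv}.
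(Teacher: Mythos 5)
Your proposal follows essentially the same route as the paper's proof: split $e^k = r^k + e_h^k$, control $r^k$ by \eqref{approximab}, apply the discrete inf-sup condition \eqref{discreteinfsup} to $e_h^k$, derive the consistency relation for $(e_h^k,\diver{}\lvt)$ whose right-hand side consists of the discrete time derivative, the viscous term, the convective discrepancy expanded via \eqref{convectivemanipulation}, the curl-coupling term, the interpolation increment, and the Taylor residual, and then bound each piece using \eqref{firstdiscineq}--\eqref{additionalineq}, Theorems \ref{maintheorem} and \ref{thm:boudnderiv}, and \eqref{errordifferences}--\eqref{residualestimate}. This matches the paper's argument term for term (your right-hand-side terms are exactly the paper's $B_1$--$B_6$, including the $h^{-1/2}$ treatment of the quadratic term $\tril_h(\bv{E}_h^{k-1},\bv{E}_h^k,\lvt)$ via the inverse inequality), so no further comparison is needed.
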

\begin{proof}
As already mentioned, the approximation errors $\bv{E}_h^\dt$ and $e_h^\dt$ are actually solutions to \eqref{diffeqfullydiscv} with a special right hand side composed of
consistency terms. Condition \eqref{discreteinfsup} then allows us to write
\begin{align}
\label{pressureerr1}
\begin{split}
\beta^* \|e_h^k\|_{\ltwo} &\leq \sup_{\lvt\in \mathbb{V}_h}
\frac{(e_h^k,\diver{}\lvt)}{\ \|\lvt\|_{\honeds}} 
\leq \sum_i^6 B_i,
\end{split}
\end{align}
where
\begin{align*}
&B_1 = \sup_{\lvt\in \mathbb{V}_h} \frac{\dt^{-1}\lp\inc\bv{E}_h^{k},\lvt\rp}{\ \|\lvt\|_{\honeds}} \, , 
&&B_2 = \nunot \sup_{\lvt\in \mathbb{V}_h} \frac{\lp\gradv{E}_h^k, \gradv{}\lvt\rp}{\
\|\lvt\|_{\honeds}}  \, , \\
&B_3 = \sup_{\lvt\in \mathbb{V}_h} \frac{\tril_h(\bv{u}^k,\bv{u}^k,\lvt) 
- \tril_h(\bv{U}_h^{k-1},\bv{U}_h^{k},\lvt)}{\ \|\lvt\|_{\honeds}}  \, ,
&&B_4 = 2 \nu_r \sup_{\lvt\in \mathbb{V}_h} \frac{(\curl{W}_h^{k-1} - \curl{w}^k,\lvt)}{\
\|\lvt\|_{\honeds}}  \, , \\
&B_5 = \sup_{\lvt\in \mathbb{V}_h} \frac{\dt^{-1} \lp\inc\bv{S}^k, \lvt\rp}{\ \ \
\|\lvt\|_{\honeds}}  \, ,
&&B_6 = \sup_{\lvt\in \mathbb{V}_h} \frac{(\mathcal{R}_\bv{u}^k,\lvt)}{\ \ \ \|\lvt\|_{\honeds}} \, .
\end{align*}
So that it suffices to provide suitable bounds for each one of these terms.

We readily have, for $B_1$ and $B_2$, that
\begin{align*}
B_1 &= \sup_{\lvt\in \mathbb{V}_h} \frac{\dt^{-1}\lp\inc\bv{E}_h^{k},\lvt\rp}{\ \|\lvt\|_{\honeds}} 
\lesssim  \bigl\| \dt^{-1} \inc\bv{E}_h^{k} \bigl\|_{\ltwods} \ , \ \ \
B_2 \lesssim \|\gradv{E}_h^k\|_{\ltwods}.
\end{align*}

Identity \eqref{convectivemanipulation} can be used to express the numerator of $B_3$ as
\begin{align*}
  B_3 &\leq
  \sup_{\lvt\in \mathbb{V}_h} \frac{\tril_h(\inc \bv{u}^{k},\bv{u}^k,\lvt)}{\|\lvt\|_{\honeds}}
  + \sup_{\lvt\in \mathbb{V}_h} \frac{\tril_h(\bv{u}^{k-1},\bv{S}^k,\lvt)}{\|\lvt\|_{\honeds}}
  + \sup_{\lvt\in \mathbb{V}_h} \frac{\tril_h(\bv{E}^{k-1},\bv{u}_h^k, \lvt)}{\|\lvt\|_{\honeds}} \\
  &+  \sup_{\lvt\in \mathbb{V}_h} \frac{\tril_h(\bv{E}_h^{k-1},\bv{E}_h^k, \lvt)}{\|\lvt\|_{\honeds}}
  + \sup_{\lvt\in \mathbb{V}_h} \frac{\tril_h(\bv{u}_h^{k-1}, \bv{E}_h^{k}, \lvt) }{\|\lvt\|_{\honeds}}
  = \sum_{i=1}^5 B_{3i}.
\end{align*} 
Inequality \eqref{secondineqstar} and the regularity assumptions \eqref{initialregass} imply
\[
B_{31} = \sup_{\lvt\in \mathbb{V}_h} \frac{\tril_h(\inc \bv{u}^{k},\bv{u}^k,\lvt)}{\
\|\lvt\|_{\honeds}}
\lesssim \|\bv{u}^k\|_{\htwos} \|\inc \bv{u}^{k}\|_{\ltwods} \lesssim \|\inc \bv{u}^{k}\|_{\ltwods}.
\]
To bound
$B_{32}$, $B_{33}$ and $B_{35}$ we use inequality \eqref{firstdiscineq}, the stability
\eqref{projectionstability} of the projectors and the regularity assumptions
\eqref{initialregass},
\begin{align*}
B_{32} &= \sup_{\lvt\in \mathbb{V}_h} \frac{\tril_h(\bv{u}^{k-1},\bv{S}^k,\lvt)}{\
\|\lvt\|_{\honeds}} \lesssim \|\gradv{u}^{k-1}\|_{\ltwods}   \|\gradv{S}^k\|_{\ltwods} 
\lesssim \|\gradv{S}^k\|_{\ltwods} \, , \\
B_{33} &= \sup_{\lvt\in \mathbb{V}_h} \frac{\tril_h(\bv{E}^{k-1},\bv{u}_h^k, \lvt)}{\
\|\lvt\|_{\honeds}} \lesssim \|\gradv{E}^{k-1}\|_{\ltwods} \|\gradv{u}_h^{k}\|_{\ltwods} 
\lesssim \|\gradv{E}^{k-1}\|_{\ltwods} \, , \\
B_{35} &= \sup_{\lvt\in \mathbb{V}_h} \frac{\tril_h(\bv{u}_h^{k-1}, \bv{E}_h^{k}, \lvt) }{\
\|\lvt\|_{\honeds}} \lesssim \|\gradv{u}_h^{k-1} \| \|\gradv{E}_h^{k}\|_{\ltwods} 
\lesssim \|\gradv{E}_h^{k}\|_{\ltwods}. 
\end{align*}
The first inequality in \eqref{additionalineq} yields
\begin{align*}
B_{34} &= \sup_{\lvt\in \mathbb{V}_h} \frac{\tril_h(\bv{E}_h^{k-1}, \bv{E}_h^{k}, \lvt)}{\
\|\lvt\|_{\honeds}} 
\lesssim h^{-\nicefrac{1}{2}} \, \|\bv{E}_h^{k-1}\|_{\ltwods} \|\gradv{E}_h^k\|_{\ltwods}.
\end{align*}
In conclusion, we have proved the bound
\[
|B_3| \lesssim \|\inc \bv{u}^{k}\|_{\ltwods} + \|\gradv{S}^k\|_{\ltwods}
+ \|\gradv{S}^{k-1}\|_{\ltwods} + \|\gradv{E}_h^{k-1}\|_{\ltwods}
+ h^{-\nicefrac{1}{2}} \, \|\bv{E}_h^{k-1}\|_{\ltwods} \|\gradv{E}_h^k\|_{\ltwods}
+ \|\gradv{E}_h^{k}\|_{\ltwods}.
\]

Integrating by parts as in \eqref{intbypartscurl}, we infer that
\[
B_4 = 2 \nu_r \sup_{\lvt\in \mathbb{V}_h} \frac{(\curl{W}_h^{k-1} - \curl{w}^k,\lvt)}{\
\|\lvt\|_{\honeds}} = 
\sup_{\bv{v}\in \hzerod} \frac{(\inc\bv{w}^k + \ew{E}^{k-1},\curl{}\lvt)}{\ \|\lvt\|_{\honeds}}
\lesssim \|\inc\bv{w}^k\|_{\ltwods} + \|\ew{E}^{k-1}\|_{\ltwods} .
\]

Finally, we see that
\[
B_5 = \sup_{\lvt\in \mathbb{V}_h} \frac{\dt^{-1} \lp\inc\bv{S}^k , \lvt\rp}{\ \ \
\|\lvt\|_{\honeds}} \lesssim \dt^{-1} \|\inc\bv{S}^k\|_{\ltwods},
\qquad
B_6 = \sup_{\bv{v}\in \hzerod} \frac{(\mathcal{R}_\bv{u}^k,\bv{v})}{\ \|\bv{v}\|_{\honeds}} 
\lesssim \|\mathcal{R}_\bv{u}^k\|_{\ltwods}.
\] 

It suffices now to realize that all the bounds involve consistency, interpolation or approximation errors and that they all have the right order. This concludes the proof.
\end{proof}

\section{A Second Order Scheme}
\label{sec:2ndorder}

Let us present a second order scheme for the solution of \eqref{eq:microNS} and show its stability properties. We work in the setting of \S\ref{sub:timedisc} and \S\ref{finitelementspace}. We first recall a three-term recursion inequality originally shown in \cite{MR2802553}, which is instrumental to show stability.

\begin{proposition}[Three term recursion]
\label{prop:ThreeTermInduction}
The three term recursion equation
\begin{equation}
  3x^{k+1}-4x^k+x^{k-1} = g^{k+1},\quad \forall k\geq 1,
\label{eq:diff-eq-stokes}
\end{equation}
has the following general solution
\[
  x^\nu = c_1 + \frac{c_2}{3^\nu} + \sum_{l=2}^\nu\frac1{3^{\nu+1-l}}\sum_{s=2}^l g^s,
    \qquad c_1,\ c_2\in \mathbb{R}.
\]

Let $\{y^k\}_{k\ge0}$ be the solution to the three term recursion inequality
\[
  3y^{k+1}-4y^k+y^{k-1} \leq g^{k+1},\quad \forall k\geq 1,
\]
with initial data $y^0$ and $y^1.$ If $\{x^k\}_{k\ge0}$ is the
solution to \eqref{eq:diff-eq-stokes} with initial data $x^0=y^0$ and
$x^1=y^1,$ then the following estimate holds
\[
  y^\nu \leq x^\nu, \quad \forall \nu\geq0.
\]
\end{proposition}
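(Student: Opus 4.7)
The plan is to treat the statement in two parts. For the first part, I would recognize $3x^{k+1}-4x^k+x^{k-1}=g^{k+1}$ as a linear, constant-coefficient second-order difference equation, so the general solution decomposes as a homogeneous part plus any particular solution. The characteristic polynomial $3\lambda^2-4\lambda+1=(3\lambda-1)(\lambda-1)$ has roots $\lambda=1$ and $\lambda=1/3$, giving the homogeneous piece $c_1+c_2/3^\nu$ that already appears in the stated formula.

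For the particular solution $p^\nu:=\sum_{l=2}^\nu 3^{l-\nu-1}\sum_{s=2}^l g^s$, I would simply verify it by direct substitution. Expanding $3p^{k+1}-4p^k+p^{k-1}$ and collecting terms by the index $l$, the contributions from $2\le l\le k-1$ vanish because the coefficients satisfy $1-4+3=0$; the $l=k$ term contributes $-\sum_{s=2}^k g^s$; and the $l=k+1$ term contributes $\sum_{s=2}^{k+1}g^s$. The sum telescopes to $g^{k+1}$, confirming that $p^\nu$ is indeed a particular solution. Since the stated formula matches $x^0$ and $x^1$ for appropriate $c_1, c_2$, the representation is complete.

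For the comparison statement, I would introduce $e^k:=x^k-y^k$. By linearity, $e^0=e^1=0$ and
\[
3e^{k+1}-4e^k+e^{k-1}\ge 0\quad\text{for all }k\ge 1.
\]
The key algebraic observation is the factorization $3e^{k+1}-4e^k+e^{k-1}=3(e^{k+1}-e^k)-(e^k-e^{k-1})$, which rewrites the second-order inequality as a first-order one for the increments $u^k:=e^{k+1}-e^k$, namely $3u^k\ge u^{k-1}$ for $k\ge 1$, with $u^0=0$. A straightforward induction then gives $u^k\ge u^{k-1}/3\ge\dots\ge u^0/3^k=0$, so the sequence $\{e^k\}$ is nondecreasing. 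Combined with $e^0=0$, this yields $e^\nu\ge 0$, i.e.~$y^\nu\le x^\nu$ for all $\nu\ge 0$.

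No step presents a substantial obstacle: the existence/representation claim is a routine linear-algebra calculation, and the comparison reduces, once the right telescoping factorization is spotted, to a one-line induction. The only mild subtlety is choosing the factorization $(3E-I)(E-I)$ of the discrete operator — the alternate grouping $(3x^{k+1}-x^k)-(3x^k-x^{k-1})$ telescopes nicely as well, but using $u^k=e^{k+1}-e^k$ produces the cleanest monotonicity argument and matches the structure used to derive the explicit solution in the first part.
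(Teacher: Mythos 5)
Your proof is correct and complete. Note that the paper itself does not prove this proposition at all: it is stated as a recalled result, attributed to the reference cited just before it, so there is no in-paper argument to compare against. Your two steps are exactly what one would expect such a proof to contain. For the representation formula, the characteristic factorization $3\lambda^2-4\lambda+1=(3\lambda-1)(\lambda-1)$ gives the homogeneous part $c_1+c_2/3^\nu$, and your direct substitution of the particular solution is right: writing $G^l=\sum_{s=2}^l g^s$, the coefficients of $G^l$ for $2\le l\le k-1$ cancel as $1-4+3=0$, the $l=k$ term contributes $-G^k$, the $l=k+1$ term contributes $G^{k+1}$, and $G^{k+1}-G^k=g^{k+1}$; since $c_1,c_2$ can be fitted to any $x^0,x^1$, the formula is indeed the general solution. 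For the comparison, factoring the operator as $3u^k-u^{k-1}$ with $u^k=e^{k+1}-e^k$ and $e^k=x^k-y^k$ turns the second-order inequality into a first-order one with $u^0=0$, and the induction $u^k\ge u^{k-1}/3\ge 0$ gives monotonicity of $e^k$ and hence $e^\nu\ge e^0=0$. This is a clean, elementary, self-contained argument; the only thing it adds beyond what the proposition strictly needs is the observation about the alternate grouping, which is harmless.
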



For $\lb y^k \rb_{k\geq0}$ let $\inc_{^-}^2 y^k$ denote the second order backward difference, i.e.
\[
\inc_{^-}^2 y^k = \tfrac{1}{2} ( 3 y^{k+1} - 4 y^k + y^{k-2} ) \ \ \ \ \forall k \geq 2.
\]

Let us now describe the scheme. We begin with an initialization step, in which we set
\[
  \left( \bv{U}_h^k, P_h^k, \bv{W}_h^k \right) = \left( \bv{u}_h^k, p_h^k, \bv{w}_h^k \right),
  \quad k=0,1.
\]
In other words, we compute the Stokes and elliptic-like projections of the initial data and the solution on the first time step. This initialization is only for ease of presentation as it clearly requires knowledge of the exact solution. In practice one can compute the projection of the initial data and then perform one step with the first order scheme of Section~\ref{sec:Scheme}.

We march in time, for $k=2,\ldots,K$, as follows:

\noindent \textbf{Linear Momentum:} Find $(\bv{U}_h^k,P_h^k) \in \polV_h \times \polQ_h$ that solves
\begin{subequations}
  \begin{align}
    \label{eq:lmom2ndorder}
  \alp \frac{ \inc_{^-}^2 \bv{U}_h^k  }{\dt}, \lvt \arp 
  + \nunot \lp \gradv{U}_h^{k},\gradv{}\lvt \rp
  + \tril_h\lp \bv{U}_h^{k,\star},\bv{U}_h^{k},\lvt\rp - \lp P_h^{k}, \diver{}\lvt \rp 
  &= 2 \nu_r \lp \curl{W}_h^{k,\star},\lvt \rp + \lp \bv{f}^{\,k}, \lvt \rp,     \\
    \label{incomp}
    \lp \ptf, \diver{U}_h^k\rp &= 0 \, ,    
  \end{align}
\end{subequations}
for all $ \lvt \in \mathbb{V}_h$, $\ptf \in \mathbb{Q}_h$, where, for a time-discrete function $\phi^\dt$, we introduced the second order extrapolation
  \begin{equation}
  \label{eq:2ndorderextr}
    \phi^{k,\star} = 2\phi^{k-1} - \phi^{k-2}.
\end{equation}
  
\noindent \textbf{Angular Momentum:} Compute $\bv{W}_h^k \in \polV_h$, solution of 
\begin{multline}
\label{eq:amom2ndorder}
  \inertiamom   \alp \frac{ \inc_{^-}^2 \bv{W}_h^k  }{\dt},\avt \arp 
  + c_1 \lp \gradv{W}_h^{k},\nabla \avt \rp 
  + \inertiamom \tril_h\lp\bv{U}_h^{k},\bv{W}_h^{k},\avt\rp + \\
  + c_2 \lp \diver{W}_h^{k},\diver{}\avt \rp + 4 {\nu}_r \lp \bv{W}_h^{k}, \avt \rp =
  2\nu_r \lp \curl{U}_h^{k}, \avt \rp + \lp \bv{g}^{k},\avt \rp \, .
\end{multline}
for all $\avt \in \mathbb{V}_h$.

This scheme turns out to be almost unconditionally stable, as shown in the following result. To avoid irrelevant technicalities, we assume that $\bv{f}^\dt = \bv{g}^\dt = 0$.

\begin{theorem}[Stability of second order scheme]
\label{thm:stab2ndorder}
Assume that the time step satisfies
\begin{equation}
\label{eq:conddt2ndorder}
  \dt \leq \frac{\inertiamom\nu}{8\nu_r^2}.
\end{equation}
Then, the sequence $\{ \bv{U}_h^\dt, \bv{W}_h^\dt, P_h^\dt \} \subset [\polV_h]^2 \times \polQ_h$, solution of \eqref{eq:lmom2ndorder}--\eqref{eq:amom2ndorder}, satisfies
\[
  \| \bv{U}_h^\dt \|_{\ell^\infty(\bv{L}^2)} + \|\bv{W}_h^\dt \|_{\ell^\infty(\bv{L}^2)}
  +
  \| \gradv{U}_h^\dt \|_{\ell^2(\bv{L}^2)} + \|\gradv{W}_h^\dt \|_{\ell^2(\bv{L}^2)}
  \leq C,
\]
where the constant depends on the material parameters and the values 
of $\{ \bv{U}_h^k, \bv{W}_h^k, P_h^k \}$ for $k=0,1$, but does not depend on the discretization parameters.
\end{theorem}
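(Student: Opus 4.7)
The plan is to mimic the energy argument of Proposition~\ref{prop:schstable}, but using the BDF2 polarization identity in place of the backward Euler one. I would test \eqref{eq:lmom2ndorder} with $\lvt = 2\dt\,\bv{U}_h^k$ and \eqref{eq:amom2ndorder} with $\avt = 2\dt\,\bv{W}_h^k$, take $\ptf = P_h^k$ in the discrete incompressibility constraint so the pressure pairing drops out, and add the two resulting scalar identities. The trilinear terms both vanish by the skew-symmetry of $\tril_h$, and the forcings are zero by hypothesis.

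For the BDF2 time-derivative contributions I would apply the polarization identity
\[
2(3a - 4b + c,\, a) = \bigl(|a|^2 + |2a-b|^2\bigr) - \bigl(|b|^2 + |2b-c|^2\bigr) + |a - 2b + c|^2,
\]
and define the associated energies $\mathcal{E}_{\bv{U}}^k := \tfrac12\bigl(\|\bv{U}_h^k\|_{\ltwods}^2 + \|2\bv{U}_h^k - \bv{U}_h^{k-1}\|_{\ltwods}^2\bigr)$ and $\mathcal{E}_{\bv{W}}^k$ analogously. This yields the telescoping quantity $\mathcal{E}_{\bv{U}}^k + \inertiamom\mathcal{E}_{\bv{W}}^k - \mathcal{E}_{\bv{U}}^{k-1} - \inertiamom\mathcal{E}_{\bv{W}}^{k-1}$ plus nonnegative second-difference terms that can be discarded. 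Together with the dissipative contributions $2\dt\nunot\|\gradv{U}_h^k\|_{\ltwods}^2 + 2\dt c_1\|\gradv{W}_h^k\|_{\ltwods}^2 + 2\dt c_2\|\diver{W}_h^k\|_{\ltwods}^2 + 8\dt\nu_r\|\bv{W}_h^k\|_{\ltwods}^2$ on the left, the only surviving right-hand side is the cross-coupling $4\dt\nu_r\bigl[(\curl{W}_h^{k,\star},\bv{U}_h^k) + (\curl{U}_h^k,\bv{W}_h^k)\bigr]$.

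I would next rewrite this cross-coupling via \eqref{intbypartscurl} as $4\dt\nu_r(\bv{W}_h^{k,\star} + \bv{W}_h^k,\, \curl{U}_h^k)$, and apply Cauchy--Schwarz together with \eqref{curlineq} and a weighted Young inequality to absorb half of $2\dt\nunot\|\gradv{U}_h^k\|_{\ltwods}^2$ from the left, leaving a residual controlled by $C\dt\nu_r^2/\nunot\bigl(\|\bv{W}_h^{k,\star}\|_{\ltwods}^2 + \|\bv{W}_h^k\|_{\ltwods}^2\bigr)$. The key algebraic point is that $\|\bv{W}_h^{k,\star}\|_{\ltwods}^2 = 2\mathcal{E}_{\bv{W}}^{k-1} - \|\bv{W}_h^{k-1}\|_{\ltwods}^2 \leq 2\mathcal{E}_{\bv{W}}^{k-1}$ and $\|\bv{W}_h^k\|_{\ltwods}^2 \leq 2\mathcal{E}_{\bv{W}}^k$, so this residual is bounded by $C\dt\nu_r^2/\nu\,(\mathcal{E}_{\bv{W}}^{k-1} + \mathcal{E}_{\bv{W}}^k)$ after using $\nunot \geq \nu$. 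Moving the $\mathcal{E}_{\bv{W}}^k$ part back to the left, the smallness condition \eqref{eq:conddt2ndorder} is exactly what keeps the coefficient multiplying $\mathcal{E}_{\bv{W}}^k$ bounded below by a fixed positive fraction of $\inertiamom$.

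What remains is a recurrence of the form $\mathcal{E}_{\bv{U}}^k + c\,\inertiamom\,\mathcal{E}_{\bv{W}}^k + (\text{dissipation}) \leq (1+C\dt)(\mathcal{E}_{\bv{U}}^{k-1} + \inertiamom\,\mathcal{E}_{\bv{W}}^{k-1})$, to which a direct application of Lemma~\ref{simplifiedgronwall} delivers the uniform-in-$k$ bound on $\mathcal{E}_{\bv{U}}^k$ and $\mathcal{E}_{\bv{W}}^k$, hence on $\|\bv{U}_h^k\|_{\ltwods}$ and $\|\bv{W}_h^k\|_{\ltwods}$; summing the retained dissipative terms then yields the $\ell^2$ control on the gradients. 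The main obstacle is exactly the extrapolation $\bv{W}_h^{k,\star}$ in the linear-momentum equation: were it replaced by $\bv{W}_h^k$, the two coupling terms would collapse antisymmetrically after integration by parts and stability would be unconditional, as in Proposition~\ref{prop:schstable}. The failure to telescope, captured by $\bv{W}_h^{k,\star} + \bv{W}_h^k = 2\bv{W}_h^k - \inc^2\bv{W}_h^k$, must therefore be paid for by the time-step restriction \eqref{eq:conddt2ndorder}, which accounts for the qualifier ``almost unconditional''.
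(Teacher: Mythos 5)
Your argument is correct, but it follows a genuinely different route from the paper's. The paper tests with $4\dt\,\bv{U}_h^k$ and $4\dt\,\bv{W}_h^k$ and uses the identity $2a^k(3a^k-4a^{k-1}+a^{k-2}) = 3|a^k|^2-4|a^{k-1}|^2+|a^{k-2}|^2+2\inc|\inc a^k|^2+|\inc^2 a^k|^2$, which does \emph{not} telescope: it produces a three-term recursion inequality for $y^k=\|\bv{U}_h^k\|_{\ltwods}^2+\inertiamom\|\bv{W}_h^k\|_{\ltwods}^2$ that is then resolved by the explicit solution formula of Proposition~\ref{prop:ThreeTermInduction}. The extrapolation is handled there exactly as you observe at the end ($\phi^k+\phi^{k,\star}=2\phi^k-\inc^2\phi^k$), but the $\inc^2\bv{W}_h^k$ piece is absorbed into the left-hand term $\inertiamom\|\inc^2\bv{W}_h^k\|_{\ltwods}^2$ supplied by that identity, at the price of $16\nu_r^2\dt^2\inertiamom^{-1}\|\gradv{U}_h^k\|_{\ltwods}^2$; the restriction \eqref{eq:conddt2ndorder} is precisely what lets this be swallowed by $4\nu\dt\|\gradv{U}_h^k\|_{\ltwods}^2$. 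You instead use the G-stability polarization $2(3a-4b+c,a)=(|a|^2+|2a-b|^2)-(|b|^2+|2b-c|^2)+|a-2b+c|^2$, which converts the BDF2 term into a genuinely telescoping augmented energy $\mathcal{E}^k$, handle the extrapolation through $\|\bv{W}_h^{k,\star}\|_{\ltwods}^2\le 2\mathcal{E}_{\bv{W}}^{k-1}$, and close with the discrete Gr\"onwall lemma --- bypassing Proposition~\ref{prop:ThreeTermInduction} entirely. What your route buys is a more standard and arguably more elementary closure (ordinary telescoping plus Lemma~\ref{simplifiedgronwall}); what the paper's buys is that the controlled quantity is the physical energy $y^k$ itself rather than the augmented one, and the time-step condition appears with the clean constant $8$. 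One caveat: your final per-step ``recurrence'' $\mathcal{E}_{\bv{U}}^k+c\,\inertiamom\,\mathcal{E}_{\bv{W}}^k\le(1+C\dt)(\mathcal{E}_{\bv{U}}^{k-1}+\inertiamom\,\mathcal{E}_{\bv{W}}^{k-1})$ with a deflated coefficient $c<1$ on the left cannot be iterated directly (naive iteration grows like $c^{-k}$); you should leave the $\dt\gamma\,\mathcal{E}_{\bv{W}}^k$ contribution on the right, sum over $k$, and apply Lemma~\ref{simplifiedgronwall} in its stated form, whose hypothesis $\dt\gamma_k<1$ is then exactly where a restriction of the form \eqref{eq:conddt2ndorder} is consumed. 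With that bookkeeping fixed the proof is complete.
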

\begin{proof}
We combine the techniques used to prove Proposition~\ref{prop:schstable} and Theorem~5.1 of \cite{MR2802553}. We begin by setting $\lvt = 4 \dt \bv{U}_h^k$ in \eqref{eq:lmom2ndorder} and $\avt = 4\dt \bv{W}_h^k$ in \eqref{eq:amom2ndorder} and adding the result. Using \eqref{incomp}, we obtain
\begin{multline*}
  3 y^k - 4 y^{k-1} + y^{k-2}
  +2\inc\left( \| \inc \bv{U}_h^k \|_{\bv{L^2}}^2 
      + \inertiamom \| \inc\bv{W}_h^k \|_{\bv{L^2}}^2  \right)
  +\| \inc^2 \bv{U}_h^k \|_{\bv{L^2}}^2 + \inertiamom \| \inc^2 \bv{W}_h^k \|_{\bv{L^2}}^2
  + 4\dt \left( \nunot \| \gradv{U}_h^k \|_{\bv{L^2}}^2 + c_1 \| \gradv{W}_h^k \|_{\bv{L^2}}^2\right) \\
  + 4 c_2 \dt \| \diver{W}_h^k \|_{\bv{L^2}}^2 + 16 \nu_r \dt \|\bv{W}_h^k \|_{\bv{L^2}}^2 
  = 8 \nu_r \dt \lp \curl{U}_h^k,  2 \bv{W}_h^k - \inc^2 \bv{W}_h^k \rp,
\end{multline*}
where
\[
  y^k = \| \bv{U}_h^k \|_{\bv{L^2}}^2 + \inertiamom \| \bv{W}_h^k \|_{\bv{L^2}}^2 \, . 
\]
Here we used the identity
\[
  2a^k(3a^k-4a^{k-1}+a^{k-2}) = 3|a^k|^2 - 4 |a^{k-1}|^2 + |a^{k-2}|^2 
  + 2\inc|\inc a^k|^2 + |\inc^2 a^k|^2 \, . 
\]
and, to produce the right hand side, we integrated by parts using \eqref{intbypartscurl} and employed the equality
\[
  \phi^k + \phi^{\star,k} = \phi^k + 2 \phi^{k-1} - \phi^{k-2} = 2\phi^k - \inc^2 \phi^k,
\]
which is a consequence of \eqref{eq:2ndorderextr}. Using \eqref{curlineq} we obtain 
\[
  8 \nu_r \dt \lp \curl{U}_h^k,  2 \bv{W}_h^k - \inc^2 \bv{W}_h^k \rp \leq
  16 \nu_r \dt \|\bv{W}_h^k \|_{\bv{L^2}}^2 + 4\nu_r \dt \| \gradv{U}_h^k \|_{\bv{L^2}}^2
  + \inertiamom \| \inc^2 \bv{W}_h^k \|_{\bv{L^2}}^2 
  + \frac{16 \nu_r^2 \dt^2}{\inertiamom} \| \gradv{U}_h^k \|_{\bv{L^2}}^2.
\]
Since $\nunot = \nu + \nu_r$ assumption \eqref{eq:conddt2ndorder} yields
\[
  4 \nu \dt - \frac{16 \nu_r^2 \dt^2}{\inertiamom} = 
  4\nu \dt \left( 1 - \frac{4\nu_r^2\dt}{\inertiamom \nu} \right)
  \geq 2\nu\dt \, .
\]
The estimates of Proposition~\ref{prop:ThreeTermInduction} imply the assertion.
\end{proof}

\begin{remark}[Time step constraint]
Notice that the constraint on the time step \eqref{eq:conddt2ndorder}, necessary for stability, is meaningful. First of all, the quantity on the right hand side has units of time. In addition, it is consistent with the fact that, for the classical Navier-Stokes equations (that is $\nu_r = 0$) no constraints are necessary for the stability of a second order semi-implicit discretization.
\end{remark}

\section{Numerical Validation}
\label{sec:numericalvalidation}
We now present a numerical validation of our error estimates. The implementation has been
carried out with the help of the \texttt{deal.II} library, see \cite{BHK2007,DealIIReference}. We use the lowest order Taylor-Hood elements, that is $\mathbb{Q}_2/\mathbb{Q}_1$, so that $\po=2$. The arising linear systems have been solved with the direct solver \texttt{UMFPACK}$^\copyright$. 

Consider a square domain $\Omega = (0,1)^2 \subset \mathbb{R}^2$, and a smooth divergence-free linear velocity, pressure, and angular velocity defined by 
\begin{align*}
\bv{u}(x,y,t) &= \left( \sin(2 \pi x + t) \, \sin(2 \pi y + t),
              \cos(2 \pi x + t) \, \cos(2 \pi y + t)  \right)^\intercal, \\
p(x,y,t) &= \sin(2 \pi (x - y) + t), \\   
\bv{w}(x,y,t) &= \sin(2 \pi x + t) \, \sin(2 \pi y + t). \, 
\end{align*}

To verify the $\ell^2(\honed)$ error for the velocity and the $\ell^2(\ltwo)$ error for the
pressure we fix the relationship $\dt = h^2$, and consider a sequence of meshes with
$h = 2^{-i}$ for $2 \leq i \leq 6$. The corresponding errors are displayed in Figure~\ref{figura1}, thereby showing clearly the predicted convergence rates.

\begin{figure}
\begin{center}
\includegraphics[scale=0.55]{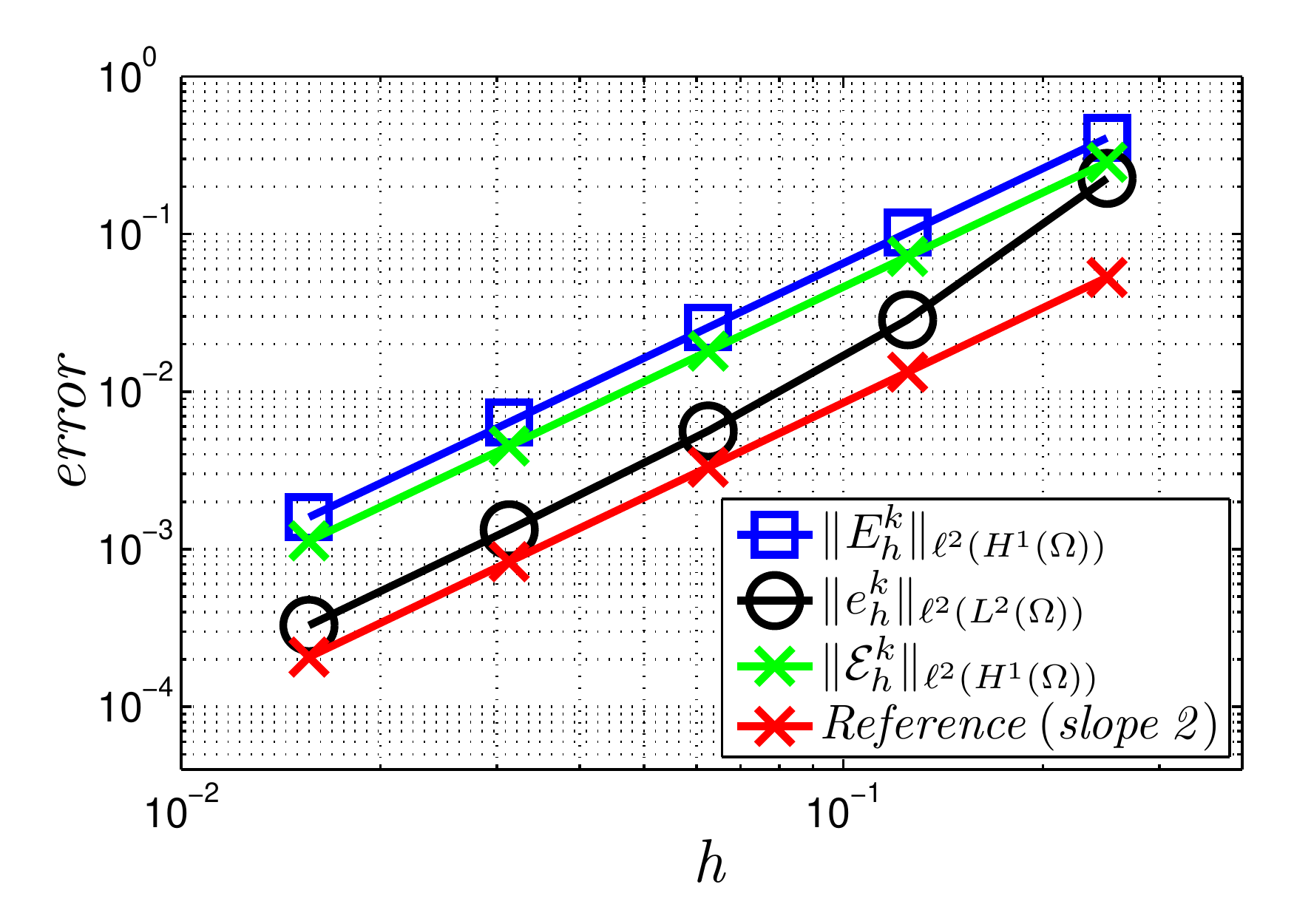}
\caption{$\ell^2(\honed)$ error of the velocities and $\ell^2(\ltwo)$ error of the
pressure with respect to mesh size. The axes are in logarithmic scale.}
\label{figura1}
\end{center}
\end{figure}

To validate the $\ell^{\infty}(\ltwod)$ error of the velocities we fix the
relationship $\dt = h^3$, and consider the same sequence of meshes.
The corresponding errors are depicted in Figure~\ref{figura2} and exhibit the expected optimal rates.

\begin{figure}
\begin{center}
\includegraphics[scale=0.55]{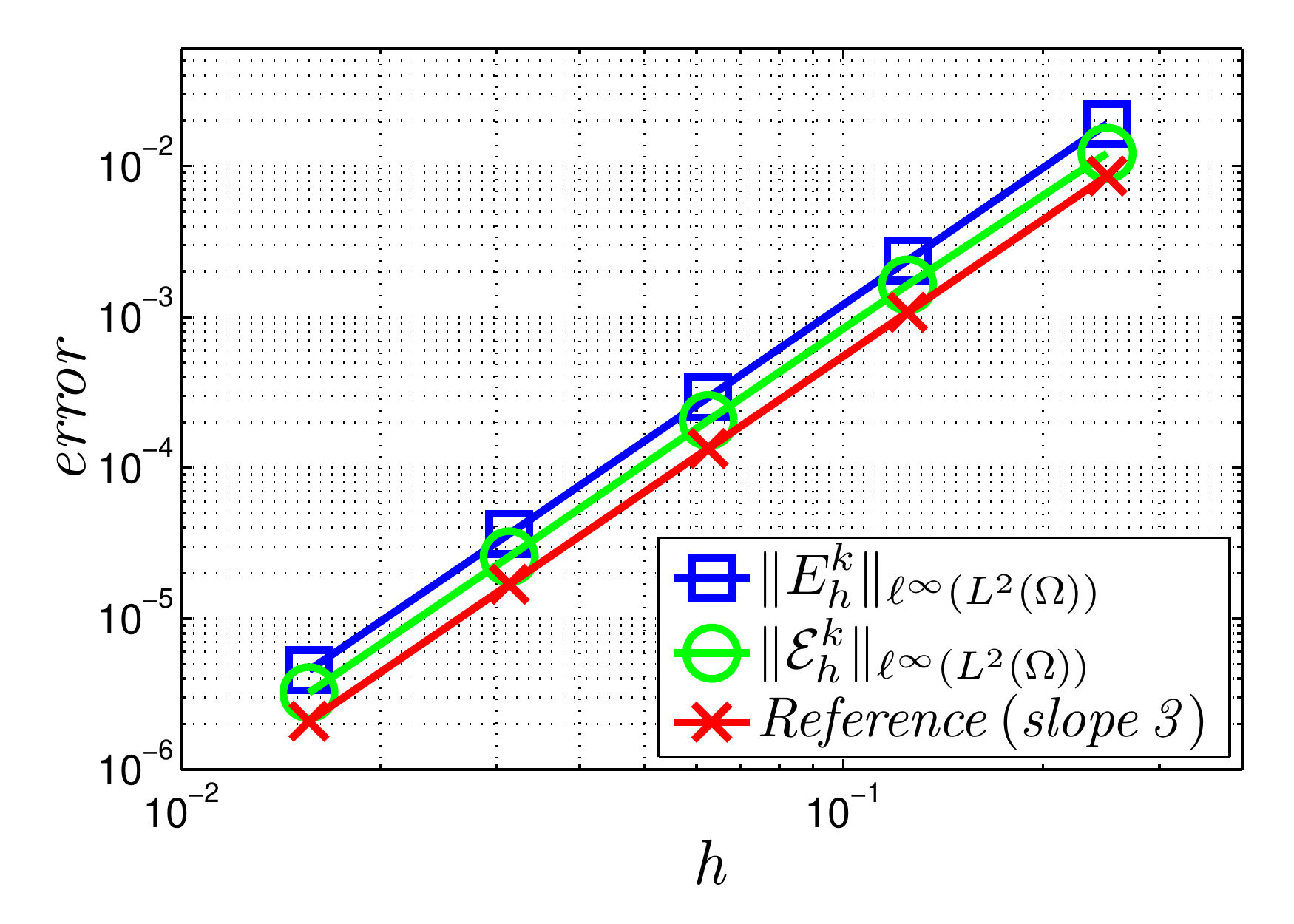}
\caption{$\ell^{\infty}(\ltwod)$ error of the velocities with respect to mesh size.
The axes are in logarithmic scale.}
\label{figura2}
\end{center}
\end{figure}

\section{Conclusions and Perspectives}
We have presented a first order, fully discrete semi-implicit scheme for the MNSE which is unconditionally stable and possesses optimal convergence rates in time and space. The scheme is semi-implicit, therefore it only involves, at every time-step, the solution of linear systems. In addition, the equations of linear and angular momentum are decoupled, which makes the implementation simpler and the scheme more efficient. To further decouple the unknowns, fractional time-stepping techniques can be incorporated, and we believe that their analysis shall not present difficulties beyond those already encountered in this work. 

We have also presented a formally second order scheme which is almost unconditionally stable and shares similar properties to the first order scheme, i.e., it is semi-implicit, decouples the linear and angular velocities and it can be easily simplified further with fractional time stepping techniques. The error analysis of such a scheme will be reported elsewhere, where in addition we will explore whether the stability condition is indeed a requirement of our scheme, or an artifact of our methods of proof.
 
The idea of pumping micropolar fluid through excitation of the spin equation was explored by testing a simple family of forcing terms $\bv{g}$. It was observed computationally that the regimes of effective pumping and reverse pumping regimes are not well separated. In other words, very similar forcing terms $\bv{g}$ can induce very different effects in the velocity profile, or even opposite effects (reverse direction of the net flow).

The most challenging extension of this work is towards the solution of the equations of ferrohydrodynamics: the MNSE with \eqref{MagmicroNS} coupled with the magnetostatic equations. The design, analysis and implementation of a scheme for this problem requires techniques and ideas well beyond those presented here, but will allow for more interesting and realistic simulations. This is part of future developments. 


\bibliographystyle{siam}
\bibliography{biblio}

\end{document}